\documentclass[11pt]{article}

\usepackage[T1]{fontenc}
\usepackage[utf8]{inputenc}
\usepackage{lmodern}
\usepackage{amsmath,amssymb,amsthm,mathtools}
\usepackage[margin=1in]{geometry}
\usepackage[colorlinks=true,linkcolor=blue,citecolor=blue,urlcolor=blue]{hyperref}

\numberwithin{equation}{section}

\newtheorem{theorem}{Theorem}[section]
\newtheorem{proposition}[theorem]{Proposition}
\newtheorem{corollary}[theorem]{Corollary}
\newtheorem{lemma}[theorem]{Lemma}
\theoremstyle{remark}
\newtheorem{remark}[theorem]{Remark}

\newcommand{\R}{\mathbb{R}}
\newcommand{\N}{\mathbb{N}}
\newcommand{\loc}{\mathrm{loc}}
\newcommand{\crit}{\mathcal{Z}}
\newcommand{\dist}{\operatorname{dist}}
\newcommand{\Dp}{\Delta_p}

\title{Higher Regularity of Homogeneous Gradient Compositions for
$p$-Laplace-Type Equations}
\author{Quoc Hung Nguyen\thanks{Academy of Mathematics and Systems
Science, Chinese Academy of Sciences, Beijing 100190, China.  E-mail:
\texttt{qhnguyen@amss.ac.cn}.}
\and Le Xuan Truong\thanks{University of Economics Ho Chi Minh City
(UEH), Ho Chi Minh, Vietnam.  E-mail:
\texttt{lxuantruong@ueh.edu.vn}.}}
\date{}
\begin{document}
\maketitle

\begin{abstract}
In this paper, we study higher regularity of homogeneous functions of
the gradient of solutions to the inhomogeneous $p$-Laplace equation
$\operatorname{div}(|Du|^{p-2}Du)=f$.  Although a solution need not be
of class $C^2$ across its critical set, its gradient is locally
Hölder continuous.  Suppose that $Du\in C^{0,\alpha}_{\loc}$ with
$\alpha\le 1/(p-1)$, and let $\Phi$ be smooth away from the origin and
positively homogeneous of degree $m$.  We prove that
$\Phi(Du)\in C^k_{\loc}$ whenever $m>k/\alpha$.  Moreover, all its
derivatives of order at most $k$ vanish on the critical set.  The proof
uses the intrinsic scale $r\simeq |Du|^{1/\alpha}$, Schauder estimates
for a normalized uniformly elliptic equation, and an extension lemma
across the critical set.  We also obtain corresponding results for
autonomous anisotropic equations and for elliptic and parabolic
$p$-Laplace systems, under the appropriate Hölder assumption on the
gradient.  Finally, the same argument gives $C^k$ regularity criteria
for high powers of nonnegative solutions to the porous medium
equation.
\end{abstract}

\noindent\textbf{Keywords.}
$p$-Laplace equation; $p$-Laplace system; parabolic $p$-Laplace
system; porous medium equation; critical set; gradient regularity;
Schauder estimate; degenerate elliptic and parabolic equations;
homogeneous composition.

\medskip
\noindent\textbf{2020 Mathematics Subject Classification.}
35J92, 35K92, 35B65, 35J60, 35K65.

\section{Introduction}
In this paper, we are concerned with weak solutions
$u\in W^{1,p}_{\loc}(\Omega)$ of
\begin{equation}\label{eq:p-poisson-intro}
 \Dp u:=\operatorname{div}\bigl(|Du|^{p-2}Du\bigr)=f
 \qquad\text{in }\Omega.
\end{equation}
Here $\Omega$ is an open subset of $\R^d$ and $1<p<\infty$.
The coefficient matrix obtained by differentiating the flux is
\[
 |Du|^{p-2}\left(I+(p-2)\frac{Du\otimes Du}{|Du|^2}\right).
\]
Thus, even when $f$ is smooth, the equation is degenerate for
$p>2$ and singular for $1<p<2$ at points where $Du=0$.  The
classical results of Evans \cite{Evans1982}, DiBenedetto
\cite{DiBenedetto1983}, Tolksdorf \cite{Tolksdorf1984}, and Lieberman
\cite{Lieberman1988,Lieberman1991} show, under standard assumptions on
the datum, that
\begin{equation}\label{eq:classical-holder}
 Du\in C^{0,\alpha_0}_{\loc}(\Omega)
\end{equation}
for some $\alpha_0=\alpha_0(d,p)\in(0,1)$.  More precise gradient
estimates can be expressed in terms of nonlinear potentials; see
\cite{DuzaarMingione2010,DuzaarMingione2010Lipschitz,
DuzaarMingione2011,Mingione2011Gradient,KuusiMingione2012Universal,
KuusiMingione2014Guide,AvelinKuusiMingione2018,
KuusiMingione2014,KuusiMingione2018}.  For singular $p$-Laplace type
equations with measure data, comparison estimates, universal potential
bounds, pointwise gradient estimates, weighted good-$\lambda$
inequalities, and existence and regularity results were developed in
\cite{NguyenPhuc2019GoodLambda,NguyenPhuc2020Pointwise,
NguyenPhuc2022Measure,NguyenPhuc2023Comparison,
NguyenPhuc2023Universal}.  

In general, local $C^{1,\alpha}$ regularity, for some
$\alpha\in(0,1)$, is the best one can expect for solutions of the
$p$-Laplace equation.  For example, the one-dimensional profile
\[
 u(x)=\frac{p-1}{p}|x_1|^{p/(p-1)}
\]
satisfies $\Delta_p u=1$, but it is not $C^2$ across $\{x_1=0\}$ when
$p>2$.  It is therefore natural to ask whether a sufficiently high
power of the gradient has better regularity.  More precisely, given
$k\in\N$, we ask whether there is an exponent $m_k$ such that
\begin{center}
	$|Du|^{m_k}\in C^k_{\loc}(\Omega)$.
\end{center}
We give an explicit sufficient condition on $m_k$.  We also consider
elliptic and parabolic $p$-Laplace systems and the porous medium
equation.

Away from the critical set
\[
 \crit(u):=\{x\in\Omega:Du(x)=0\},
\]
equation \eqref{eq:p-poisson-intro} is locally uniformly elliptic.
Hence $u$ is smooth on $\Omega\setminus\crit(u)$ when $f$ is smooth.
The main point is to control the derivatives as one approaches
$\crit(u)$.

There is an extensive regularity theory for nonlinear quantities
associated with the equation.  Second-order and two-sided estimates
for the stress field
$|Du|^{p-2}Du$ under weak assumptions on $f$ were developed by Cianchi
and Maz'ya \cite{CianchiMazya2018,CianchiMazya2019}; related Sobolev and
BMO estimates appear in \cite{DieningKaplickySchwarzacher2012,
HaaralaSarsa2022,Lou2008}.  Global gradient estimates and boundary
results require additional assumptions on the datum and on the domain;
see, among others, \cite{CianchiMazya2011,CianchiMazya2014a,
CianchiMazya2014b,CianchiMazya2015,MontoroMugliaSciunzi2025}.  Hessian
estimates and regularity for related nonlinear fields are studied in
\cite{Cellina2017,DamascelliSciunzi2004,DongPengZhangZhou2020,
Sciunzi2007,Sciunzi2014}.  These results show, in particular, that a
suitable nonlinear function of $Du$ may have better weak regularity
than $D^2u$ itself.  A weighted second-order counterpart for degenerate
$p$-Laplace type equations is given in
\cite{AntoniniCiraoloPagliarin2025}.

When $p$ is close to $2$, perturbative Calder\'on--Zygmund arguments
give stronger differentiability.  Results near uniform ellipticity
were obtained in \cite{MercuriRieySciunzi2016,GuarnottaMosconi2023}.
Baratta--Sciunzi--Vuono \cite{BarattaSciunziVuono2025} prove
third-order estimates and regularity of the stress field, while
Iandoli--Vuono \cite{IandoliVuono2025} study the local behavior of
second derivatives.  Most recently, Iandoli--Spadaro--Vuono
\cite{IandoliSpadaroVuono2026} established, for $p$ sufficiently close
to $2$, higher-order Sobolev regularity for $|Du|^{s-2}Du$ and
$|Du|^{s-2}D^2u$, as well as weighted local $L^\infty$ bounds for their
higher derivatives.  Their proof uses regularization, differentiated
energy estimates, Calder\'on--Zygmund inequalities, multivariate
Fa\`a di Bruno formulas \cite{ConstantineSavits1996}, and Moser
iteration \cite{Moser1961,Serrin1964}.

Our result has a different form.  We consider a smooth homogeneous
function $\Phi$ and study the classical regularity of $\Phi(Du)$
across the whole critical set.  The argument applies to every
$1<p<\infty$ once a gradient Hölder exponent is available.  It is
based on a pointwise normalization and local Schauder estimates, and
does not require $p$ to be close to $2$.  The required degree of
homogeneity depends on the exponent in \eqref{eq:classical-holder}.

We also recall some related results for anisotropic equations and
systems.
Interior and global second-order estimates for anisotropic equations
are proved in \cite{AntoniniCiraoloFarina2023,
AntoniniCianchiCiraoloFarinaMazya2025,BarattaMugliaVuono2025,
CastorinaRieySciunzi2019}; recent Lipschitz and $C^{1,\beta}$ results
include \cite{AntoniniCianchi2025,Antonini2026}.  For systems, see
\cite{BalciCianchiDieningMazya2022,Miskiewicz2018,
MontoroMugliaSciunziVuono2025,SciunziSpadaroVuono2025}; the
foundational full-regularity result for systems with Uhlenbeck
structure is \cite{Uhlenbeck1977}.
For the parabolic $p$-Laplace system, interior Hölder continuity of the
spatial gradient goes back to DiBenedetto--Friedman
\cite{DiBenedettoFriedman1985}; see also
\cite{Boegelein2015,BoegeleinDuzaarLiaoScheven2022,
BoegeleinDuzaarGianazzaLiaoScheven2025,
KuusiMingione2013Parabolic,KuusiMingione2014Wolff} for quantitative,
potential-theoretic, and more general developments.
Nonautonomous and nonuniformly elliptic problems involve additional
difficulties \cite{DeFilippisMingione2020,DeFilippisMingione2023,
DeFilippisMingione2023Schauder}.  In the anisotropic case considered
below, we keep the operator autonomous and homogeneous, since these
properties are preserved by the normalization.

We next describe the main idea of the proof.  Consider first the model
quantity
\[
 W_m:=|Du|^m.
\]
If $z\in\crit(u)$, then \eqref{eq:classical-holder} gives
\[
 W_m(x)\lesssim |x-z|^{m\alpha_0}.
\]
This immediately suggests differentiability at $z$ when
$m\alpha_0>1$, but it does not by itself control the derivative of
$W_m$ at nearby noncritical points.  That control comes from the
equation.  At a point $x$ with $q=|Du(x)|>0$, the natural radius on
which $|Du|$ remains comparable to $q$ is
\begin{equation}\label{eq:intrinsic-scale-intro}
 r\simeq q^{1/\alpha}.
\end{equation}
After normalization on this scale, the equation becomes uniformly
elliptic on a fixed ball.  Schauder estimates then give, schematically,
\begin{equation}\label{eq:schematic-estimates}
 |D^\ell u(x)|\lesssim
 q^{1-(\ell-1)/\alpha},
 \qquad
 |D^j W_m(x)|\lesssim q^{m-j/\alpha}.
\end{equation}
The second estimate tends to zero as $x$ approaches the critical set
whenever $m>j/\alpha$.  An extension lemma then gives the derivatives
on $\crit(u)$ and yields $C^k$ regularity for $m>k/\alpha$.  Notice
that the H\"older continuity of $Du$ alone only controls the values of
$W_m$ near the critical set.  The equation is used to estimate its
derivatives at nearby noncritical points.

Our main result applies to every smooth positively homogeneous map
$\Phi$ and gives the estimate
$|D^j(\Phi(Du))|\lesssim |Du|^{m-j/\alpha}$.  We also prove analogous
results for autonomous anisotropic operators and for vectorial
$p$-Laplace systems.  In the parabolic case we keep track separately
of spatial and temporal derivatives.  The last section treats powers
of nonnegative solutions to the porous medium equation.  The proofs
use classical H\"older and Schauder estimates; see
\cite{GilbargTrudinger,GiaquintaMartinazzi2012}.

We now state the main result.  A map
$\Phi:\R^d\setminus\{0\}\to\R^N$ is positively homogeneous of degree
$m$ if
\begin{equation}\label{eq:homogeneity}
 \Phi(t\xi)=t^m\Phi(\xi) \qquad (t>0,\ \xi\ne0).
\end{equation}

\begin{theorem}\label{thm:main}
Let $k\in\N$, let $f\in C^\infty(\Omega)$, and let
$u\in W^{1,p}_{\loc}(\Omega)$ solve \eqref{eq:p-poisson-intro}.
Suppose that \eqref{eq:classical-holder} holds, and fix
$\alpha\in(0,1)$ such that
\begin{equation}\label{eq:alpha-choice}
	\alpha\le \min\left\{\alpha_0,\frac1{p-1}\right\}.
\end{equation}
Let $\Phi\in C^\infty(\R^d\setminus\{0\};\R^N)$ be positively
homogeneous of degree $m$.  If $m>k/\alpha$, extend $\Phi$
continuously to the origin by setting $\Phi(0)=0$.  Then the extended
map satisfies
\[
 	\Phi(Du)\in C^k_{\loc}(\Omega;\R^N).
\]
Moreover, for every $0\le j\le k$,
\begin{equation}\label{eq:vanishing-on-critical-set}
	 D^j\bigl(\Phi(Du)\bigr)=0 \quad\text{on } \quad \crit(u).
\end{equation}
In addition, for every $K\Subset\Omega$, there are $q_0>0$ and $C>0$
such that
\begin{equation}\label{eq:composition-quantitative-intro}
 \left|D^j\bigl(\Phi(Du)\bigr)(x)\right| \le C|Du(x)|^{m-j/\alpha}
\end{equation}
whenever $x\in K\setminus\crit(u)$, $|Du(x)|\le q_0$, and $0\le j\le k$.
\end{theorem}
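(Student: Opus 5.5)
The proof has three steps: a rescaling at noncritical points, Schauder estimates in the rescaled variables, and an extension lemma across $\crit(u)$.

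\emph{Step 1 (normalization at the intrinsic scale).} Fix $K\Subset K'\Subset\Omega$ and let $[Du]_{\alpha}$ denote the $C^{0,\alpha}$ seminorm on $K'$. This is finite since $\alpha\le\alpha_0$. Take $x_0\in K\setminus\crit(u)$ and set $q=|Du(x_0)|\le q_0$. Put $r=c\,q^{1/\alpha}$ with $c$ small. Then $|Du-Du(x_0)|\le [Du]_\alpha r^\alpha\le q/2$ on $B_r(x_0)$, so $q/2\le|Du|\le 3q/2$ there. Define
\[
v(y)=\frac{u(x_0+ry)-u(x_0)}{rq},\qquad y\in B_1 .
\]
Then $Dv(y)=Du(x_0+ry)/q$, so $1/2\le|Dv|\le3/2$ on $B_1$, and $v$ solves
\[
\Delta_p v=g,\qquad g(y)=\frac{r}{q^{p-1}}f(x_0+ry).
\]
The condition $\alpha\le 1/(p-1)$ is used here: it gives $r/q^{p-1}\simeq q^{1/\alpha-(p-1)}\le C$ for $q\le q_0\le1$. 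Moreover $D^\ell g=r^{\ell+1}q^{1-p}(D^\ell f)(x_0+ry)$ is bounded uniformly for every $\ell$. Hence all data are uniformly controlled, independently of $x_0$.

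\emph{Step 2 (Schauder bootstrap).} On $B_1$ the equation is uniformly elliptic, because $|Dv|\in[1/2,3/2]$. Difference quotients give $v\in W^{2,2}$. The derivatives $w=\partial_i v$ satisfy the divergence-form linear equation $\operatorname{div}(A(Dv)Dw)=\partial_i g$, whose coefficients $A(\xi)=|\xi|^{p-2}(I+(p-2)\xi\otimes\xi/|\xi|^2)$ are smooth and uniformly elliptic on $\{1/2\le|\xi|\le3/2\}$. Since $\|Dv\|_{C^{0,\alpha}(B_1)}\le C$ by the choice of $r$, De Giorgi/Schauder estimates give $Dv\in C^{1,\beta}(B_{1/2})$. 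Iterating the Schauder estimates yields $\|D^\ell v\|_{L^\infty(B_{1/4})}\le C_\ell$ for all $\ell$, with constants independent of $x_0$; only finitely many $\ell\le k+1$ are needed. Scaling back, $D^\ell u(x_0)=q\,r^{1-\ell}D^\ell v(0)$, so
\[
|D^\ell u(x_0)|\le C q^{1-(\ell-1)/\alpha}.
\]
Next, $\Phi(Du)(x_0+ry)=q^m\Phi(Dv(y))$ by homogeneity. Since $\Phi$ is smooth on the annulus $\{1/2\le|\xi|\le3/2\}$ and $Dv$ has bounded derivatives of every order, the chain rule gives
\[
|D^j(\Phi(Du))(x_0)|=q^m r^{-j}\,|D^j(\Phi\circ Dv)(0)|\le Cq^{m-j/\alpha},
\]
which is \eqref{eq:composition-quantitative-intro}. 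Points with $|Du|>q_0$ are handled by the standard interior smoothness away from $\crit(u)$.

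\emph{Step 3 (extension across $\crit(u)$).} This is the main obstacle. The open set $U=\Omega\setminus\crit(u)$ is handled by Step 2. Define $F_j=D^j(\Phi(Du))$ on $U$ and $F_j=0$ on $\crit(u)$. Since $m-j/\alpha>0$ for $j\le k$ and $Du$ is continuous, $F_j$ is continuous on $\Omega$. It remains to show that $F_{j+1}$ is the derivative of $F_j$ at each $z\in\crit(u)$, for $j<k$, i.e.\ that $|F_j(x)|=o(|x-z|)$. By the Hölder bound,
\[
|F_j(x)|\le C|Du(x)|^{m-j/\alpha}\le C|x-z|^{\alpha m-j}.
\]
Since $\alpha m>k\ge j+1$, we have $\alpha m-j>1$, so $|F_j(x)|=o(|x-z|)$. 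This holds at points of $U$ and trivially on $\crit(u)$. Points of $U$ far from the critical set but with $|Du|>q_0$ are irrelevant as $x\to z$, since $|Du(x)|\to0$ there.

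Thus each $F_j$ is differentiable on all of $\Omega$ with derivative $F_{j+1}$, which is continuous. Induction on $j$ gives $\Phi(Du)\in C^k_{\loc}$ together with \eqref{eq:vanishing-on-critical-set}. The only delicate point in this step is checking the differentiability at points of $\crit(u)$ approached through $U$ itself, which the bound above handles uniformly.
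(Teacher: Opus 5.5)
Your proposal is correct and follows the paper's proof essentially step for step: the intrinsic radius $r\simeq q^{1/\alpha}$, the normalization $v=(u(x_0+ry)-u(x_0))/(rq)$ with forcing prefactor $rq^{1-p}$ kept bounded by $\alpha\le 1/(p-1)$, a uniform Schauder bootstrap, the homogeneity identity $\Phi(Du)(x_0+ry)=q^m\Phi(Dv(y))$, and a flat extension across $\crit(u)$. The only cosmetic difference is that you check differentiability at each $z\in\crit(u)$ directly from $|Du(x)|\le[Du]_\alpha|x-z|^\alpha$, whereas the paper states this step as a general extension lemma with bounds in terms of $\dist(x,\crit(u))$.
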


\begin{remark}
The exponent $\alpha_0$ is supplied by classical gradient regularity
and depends on the structural parameters $d$ and $p$.  The theorem
uses any $0<\alpha\le\min\{\alpha_0,1/(p-1)\}$; no optimality of
$\alpha_0$ is assumed.  Thus the sufficient degree depends on
$d,p$, and $k$ through the chosen exponent, whereas local norms of
$u$ and $f$ enter only the constants in the quantitative estimates.
\end{remark}

Taking $\Phi(\xi)=|\xi|^m$, we obtain the following consequence.

\begin{corollary}[Powers of the gradient]
\label{cor:power}
Under the hypotheses of Theorem \ref{thm:main},
\begin{equation}\label{eq:power-ck}
 |Du|^m\in C^k_{\loc}(\Omega)
 \qquad\text{if}\qquad
 m>\frac{k}{\alpha}.
\end{equation}
In particular,
\begin{equation}\label{eq:power-c1}
 |Du|^m\in C^1_{\loc}(\Omega)
 \qquad\text{for every}\qquad
 m>\max\left\{\frac1{\alpha_0},p-1\right\}.
\end{equation}
Its derivative is
\begin{equation}\label{eq:power-derivative}
 D(|Du|^m)=
 \begin{cases}
 m|Du|^{m-2}D^2u\,Du,&Du\ne0,\\[3pt]
 0,&Du=0.
 \end{cases}
\end{equation}
\end{corollary}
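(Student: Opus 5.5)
The corollary is obtained by applying Theorem \ref{thm:main} to the single map $\Phi(\xi)=|\xi|^m$. This map is $C^\infty$ on $\R^d\setminus\{0\}$ with values in $\R^N$, $N=1$, and it is positively homogeneous of degree $m$ in the sense of \eqref{eq:homogeneity}. The continuous extension $\Phi(0)=0$ prescribed in the theorem is exactly $|0|^m=0$, so $\Phi(Du)=|Du|^m$ everywhere in $\Omega$. Hence, whenever $m>k/\alpha$ with $\alpha$ chosen as in \eqref{eq:alpha-choice}, Theorem \ref{thm:main} gives \eqref{eq:power-ck} directly.

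For \eqref{eq:power-c1}, take $k=1$. We may assume $\alpha_0<1$; this is harmless, since \eqref{eq:classical-holder} with a given exponent implies it locally with any smaller exponent. Then the largest admissible choice in \eqref{eq:alpha-choice} is
\[
 \alpha=\min\left\{\alpha_0,\frac1{p-1}\right\}.
\]
When $p\le 2$ we have $1/(p-1)\ge1>\alpha_0$, so $\alpha=\alpha_0$. In every case
\[
 \frac1\alpha=\max\left\{\frac1{\alpha_0},\,p-1\right\}.
\]
So the condition $m>1/\alpha$ is the same as $m>\max\{1/\alpha_0,p-1\}$, and \eqref{eq:power-c1} follows from the case $k=1$.

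It remains to identify the derivative in \eqref{eq:power-derivative}, which we treat in two cases.
\begin{itemize}
\item[(a)] On the critical set $\crit(u)$, the vanishing statement \eqref{eq:vanishing-on-critical-set} with $j=1$ gives $D(|Du|^m)=0$.
\item[(b)] On the open set $\Omega\setminus\crit(u)$, the equation is locally uniformly elliptic and $f$ is smooth, so $u$ is smooth there, as noted in the introduction. The chain rule then gives
\[
 \partial_i|Du|^m=m|Du|^{m-2}\sum_l \partial_l u\,\partial_{il}u,
\]
that is, $D(|Du|^m)=m|Du|^{m-2}D^2u\,Du$.
\end{itemize}
The classical $C^1$ derivative from the theorem agrees with these pointwise formulas in both cases, which proves \eqref{eq:power-derivative}. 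All the substantive work (the intrinsic-scale Schauder estimates and the extension across $\crit(u)$) is contained in Theorem \ref{thm:main}. The only step needing care here is the bookkeeping that identifies $1/\alpha$ with $\max\{1/\alpha_0,p-1\}$.
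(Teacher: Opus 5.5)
Your proposal is correct and matches the paper's proof step for step. You apply Theorem \ref{thm:main} with $\Phi(\xi)=|\xi|^m$, take $k=1$ with $\alpha=\min\{\alpha_0,1/(p-1)\}$ so that $1/\alpha=\max\{1/\alpha_0,p-1\}$, and read off \eqref{eq:power-derivative} from the chain rule off $\crit(u)$ and from \eqref{eq:vanishing-on-critical-set} on $\crit(u)$. (Your caveat about assuming $\alpha_0<1$ is unnecessary, since \eqref{eq:classical-holder} already takes $\alpha_0\in(0,1)$, but it does no harm.)
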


\begin{remark}[Dependence of the threshold]
\label{rem:threshold-dependence}
The sufficient degree $m>k/\alpha$ depends on the available gradient
H\"older exponent and on the desired differentiability order, but not
on the size of $f$.  Local norms of $f$ and of $Du$ enter the constants
$q_0$ and $C$ in \eqref{eq:composition-quantitative-intro}.  Thus a
larger datum changes the quantitative neighborhood on which the
intrinsic estimate is applied, not the homogeneity threshold itself.
\end{remark}

Theorems \ref{thm:system} and \ref{thm:parabolic-system} give the
corresponding results for elliptic and parabolic $p$-Laplace systems.
For the parabolic problem, spatial and temporal derivatives have
different homogeneity costs.  Proposition \ref{prop:porous-power}
treats powers of nonnegative solutions to the porous medium equation.

\begin{remark}[On the size of the critical set]
Theorem \ref{thm:main} does not give an estimate for the measure or
Hausdorff dimension of $\crit(u)$.  Such an estimate is not possible
under the present assumptions.  Indeed, a constant solution with
$f\equiv0$ has $\crit(u)=\Omega$.  Additional nondegeneracy or
unique-continuation assumptions would be needed to study the size of
the critical set.
\end{remark}

The restriction involving $p-1$ is unavoidable for the general
inhomogeneous equation.  Indeed, the one-dimensional constant-source
solution
\begin{equation}\label{eq:model-intro}
 u(x)=\frac{p-1}{p}|x_1|^{p/(p-1)}
\end{equation}
satisfies $\Dp u=1$, whereas
$|Du|^m=|x_1|^{m/(p-1)}$.  Thus the strict condition $m>p-1$ is
necessary for a universal $C^1$ statement.

For $p>2$ in the plane, the optimal estimate
$u\in C^{1,1/(p-1)}_{\loc}$ for bounded data was proved by
Araújo--Teixeira--Urbano \cite{AraujoTeixeiraUrbano2017}, building on
the planar $p$-harmonic theory of Iwaniec--Manfredi
\cite{IwaniecManfredi1989}.  In this setting \eqref{eq:power-c1}
reduces to the sharp range $m>p-1$.  The corresponding optimal
higher-dimensional regularity problem is substantially more delicate;
see \cite{AraujoTeixeiraUrbano2018}.

The argument is not tied to rotational invariance.  In Section
\ref{sec:anisotropic} we prove the analogous statement for autonomous
anisotropic operators $\operatorname{div}A(Du)$ for which $A$ is
$(p-1)$-homogeneous and elliptic on the unit sphere.

The paper is organized as follows.  Section \ref{sec:extension}
contains the extension lemma.  Sections \ref{sec:intrinsic} and
\ref{sec:composition} establish the scalar intrinsic estimates and
prove Theorem \ref{thm:main}.  Section \ref{sec:anisotropic} treats
autonomous anisotropic equations, Section \ref{sec:systems} treats the
elliptic vectorial system, and Section \ref{sec:parabolic-system}
develops the parabolic counterpart.  Section \ref{sec:porous-medium}
records the corresponding porous-medium application.

\section{An extension lemma across a zero set}
\label{sec:extension}

We first record an elementary lemma that separates the extension
argument from the PDE estimates.  The statement is given for scalar
functions; it applies componentwise to vector-valued maps.

\begin{lemma}[Flat extension]
\label{lem:extension}
Let $U\subset\R^d$ be open, let $Z\subset U$ be closed, let $k\in\N$,
and let $\mu>k$.  Suppose that $F\in C^\infty(U\setminus Z)\cap C(U)$
and $F=0$ on $Z$.  Assume that for every $K\Subset U$ there is a
constant $C_K$ and a radius $\rho_K>0$ such that
\begin{equation}\label{eq:extension-assumption}
 |D^jF(x)|\le C_K\dist(x,Z)^{\mu-j},
 \qquad 0\le j\le k,
\end{equation}
for every $x\in K\setminus Z$ with $\dist(x,Z)<\rho_K$.  Then
$F\in C^k(U)$ and
\begin{equation}\label{eq:extension-vanishing}
 D^jF=0\quad\text{on }Z,
 \qquad 0\le j\le k.
\end{equation}
\end{lemma}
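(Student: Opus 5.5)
We argue by induction on the order $j$, showing for $j=0,1,\dots,k$ that $F\in C^j(U)$ and that $D^jF=0$ on $Z$. The statement is local, so we fix a point $z\in Z$ and work in a small ball $B=B_\rho(z)\Subset U$. We choose $K=\overline{B_{2\rho}(z)}$ and take $\rho$ so small that $2\rho<\rho_K$. Every $x\in B\setminus Z$ then satisfies $\dist(x,Z)\le|x-z|<\rho_K$, so \eqref{eq:extension-assumption} applies there. The case $j=0$ is part of the hypotheses: $F$ is continuous and vanishes on $Z$. On the open set $U\setminus Z$ nothing needs proving beyond matching limits, because $F$ is smooth there. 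The whole issue is the behavior at points of $Z$.

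For the inductive step, assume that $F\in C^{j-1}(U)$ with $D^{j-1}F=0$ on $Z$, where $j\le k$. Fix $z\in Z$ and a multi-index $\beta$ with $|\beta|=j-1$, and set $G=\partial^\beta F$. We must show that $\partial_iG(z)$ exists and equals $0$, i.e. that $|G(z+h)|=o(|h|)$. Since $G(z)=0$, it suffices to have the bound $|G(x)|\le C\dist(x,Z)^{\mu-j+1}$ for all $x$ near $z$. Off $Z$ this is exactly \eqref{eq:extension-assumption} with exponent $j-1$. On $Z$ the bound holds trivially, since $G=0$ there by the inductive hypothesis. Therefore $|G(z+h)|\le C|h|^{\mu-j+1}$. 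Because $\mu>k\ge j$, the exponent $\mu-j+1$ exceeds $1$, so $DG(z)=0$.

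It remains to check that $DG$, now defined on all of $B$, is continuous at $z$. For $x\in B\setminus Z$ we have $|D^jF(x)|\le C\dist(x,Z)^{\mu-j}\le C|x-z|^{\mu-j}\to0$, using $\mu-j>0$. At points of $Z$ the derivative $D^jF$ has just been shown to vanish. Hence $D^jF$ is continuous at every point of $Z$. On $U\setminus Z$ it is continuous because $F$ is smooth there. This gives $F\in C^j(U)$ with $D^jF=0$ on $Z$, which closes the induction and proves \eqref{eq:extension-vanishing}.

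The only point needing care is the differentiability at $z$. There, the difference quotient is taken along segments $z+h$ that may cross $Z$ many times, so an argument based on a mean value theorem is unavailable. The plan avoids this by never integrating along such segments. It uses only the pointwise bound on $G$ and the fact that $G(z)=0$: the estimate $\dist(z+h,Z)\le|h|$ together with $G=0$ on $Z$ handles every point at once. A second point is to make sure that the hypothesis applies on a neighborhood of $z$, uniformly. This is handled by the choice of $K$ and $\rho$ in the first paragraph. I expect no genuine obstacle beyond these points. The strict inequality $\mu>k$ is used exactly once per order, in the form $\mu-j>0$.
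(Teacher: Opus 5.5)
Your proof is correct and follows essentially the same route as the paper. Both argue by induction on the order. At each step, $\dist(z+h,Z)\le|h|$ together with the vanishing of the lower derivative on $Z$ gives $|D^{j-1}F(z+h)|\le C|h|^{\mu-j+1}=o(|h|)$, and then $|D^jF(x)|\le C\dist(x,Z)^{\mu-j}\to0$ yields continuity across $Z$.

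The only slip is that choosing $K=\overline{B_{2\rho}(z)}$ and then requiring $2\rho<\rho_K$ is circular, since $\rho_K$ depends on $K$. Fix a compact neighborhood $K\Subset U$ of $z$ first, and then take $\rho<\rho_K$ with $B_\rho(z)\subset K$.
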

\begin{proof}
There is nothing to prove if $Z=\emptyset$.  Suppose that
$Z\ne\emptyset$.  We prove the result by constructing the derivatives
of $F$ on $Z$ inductively.

Fix $z\in Z$ and a compact neighborhood $K\Subset U$ of $z$.  Since
$\dist(x,Z)\le |x-z|$, estimate \eqref{eq:extension-assumption} with
$j=0$ gives
\[
 \frac{|F(x)-F(z)|}{|x-z|}
 \le C_K\frac{\dist(x,Z)^\mu}{|x-z|}
 \le C_K|x-z|^{\mu-1}\longrightarrow0
\]
as $x\to z$ through $U\setminus Z$; the quotient is zero for
$x\in Z$.  Thus $F$ is Fr\'echet differentiable at $z$ and
$DF(z)=0$.  Moreover,
\[
 |DF(x)|\le C_K\dist(x,Z)^{\mu-1}\longrightarrow0
 \qquad\text{as }x\to Z.
\]
Consequently, the derivative obtained by setting $DF=0$ on $Z$ is
continuous, and $F\in C^1$ near $z$.

Suppose inductively that $1\le\ell<k$, that $F\in C^\ell$ near $z$,
and that $D^jF=0$ on $Z$ for $0\le j\le\ell$.  Then
\[
 \frac{|D^\ell F(x)-D^\ell F(z)|}{|x-z|}
 \le C_K\frac{\dist(x,Z)^{\mu-\ell}}{|x-z|}
 \le C_K|x-z|^{\mu-\ell-1}\longrightarrow0,
\]
because $\mu>k\ge\ell+1$.  Hence $D^\ell F$ is differentiable at $z$
with derivative zero.  Finally,
\[
 |D^{\ell+1}F(x)|
 \le C_K\dist(x,Z)^{\mu-\ell-1}\longrightarrow0
 \qquad\text{as }x\to Z,
\]
so $D^{\ell+1}F$ is continuous across $Z$.  Induction completes the
proof.
\end{proof}

\begin{remark}
The conclusion is local.  It is enough that
\eqref{eq:extension-assumption} hold in a neighborhood of $Z$, since
$F$ is already smooth on $U\setminus Z$.
\end{remark}

\section{The intrinsic noncritical scale}
\label{sec:intrinsic}

We first obtain uniform higher-order estimates in terms of the size of
the gradient.  Fix
\begin{equation}\label{eq:compact-sets}
 K\Subset U\Subset\Omega
\end{equation}
and set
\begin{equation}\label{eq:H-def}
 H:=[Du]_{C^{0,\alpha}(U)}.
\end{equation}
If $H=0$, then $Du$ is constant on each connected component of $U$, and
all conclusions are immediate.  We therefore assume $H>0$.

We shall use the following standard noncritical regularity lemma.  Its
formulation also applies to the anisotropic fluxes considered in Section
\ref{sec:anisotropic}.  Set
\[
 \mathcal K_*:=\{\xi\in\R^d:1/2\le |\xi|\le3/2\}.
\]

\begin{lemma}[Noncritical smoothness and uniform Schauder bootstrap]
\label{lem:noncritical-smoothness}
\label{lem:schauder-bootstrap}
Let $L\ge1$ and $0<\alpha<1$.  Let $\mathcal O$ be an open
neighborhood of $\mathcal K_*$ and let
$\mathcal A\in C^{L+1}(\mathcal O;\R^d)$ satisfy
\begin{equation}\label{eq:general-flux-ellipticity}
 \lambda|\eta|^2
 \le \langle D_\xi\mathcal A(\xi)\eta,\eta\rangle,
 \qquad |D_\xi\mathcal A(\xi)|\le\Lambda
 \quad(\xi\in\mathcal K_*,\ \eta\in\R^d)
\end{equation}
for some $0<\lambda\le\Lambda$.  Suppose that
$v\in W^{1,2}(B_2)$ is a weak solution of
\begin{equation}\label{eq:general-flux-equation}
 \operatorname{div}\mathcal A(Dv)=g\quad\text{in }B_2
\end{equation}
and that $Dv$ has a $C^{0,\alpha}(B_2)$ representative satisfying
\begin{equation}\label{eq:bootstrap-hypotheses}
 \frac34\le |Dv|\le\frac54,\qquad
 \|Dv\|_{C^{0,\alpha}(B_2)}\le M,\qquad
 \|g\|_{C^{L-1,\alpha}(B_2)}\le G.
\end{equation}
Then $v\in C^{L+1,\alpha}(B_1)$ and
\begin{equation}\label{eq:bootstrap-conclusion}
 \|v\|_{C^{L+1,\alpha}(B_1)}
 \le C\bigl(d,\alpha,L,\lambda,\Lambda,
       \|\mathcal A\|_{C^{L+1}(\mathcal K_*)},
       M,G,\|v\|_{C^0(B_2)}\bigr),
\end{equation}
where $C$ is finite and increasing in its last three arguments.  If
$\mathcal A\in C^\infty(\mathcal O)$ and $g\in C^\infty(B_2)$, then
applying the estimate for every $L$ on smaller concentric balls gives
$v\in C^\infty(B_2)$.
\end{lemma}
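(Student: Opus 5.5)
The plan is to linearize the equation in nondivergence form on $B_2$ and run the classical Schauder bootstrap, differentiating the equation one order at a time.

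\emph{Step 1: linearization.} Since $|Dv|\in[3/4,5/4]\subset\mathcal K_*$ and $Dv$ is Hölder, the coefficients $a^{ij}(x):=\partial_{\xi_j}\mathcal A^i(Dv(x))$ are defined on $B_2$. By \eqref{eq:general-flux-ellipticity} they are uniformly elliptic with constants $\lambda,\Lambda$. They lie in $C^{0,\alpha}(B_2)$ with norm controlled by $\|\mathcal A\|_{C^{1}(\mathcal K_*)}$ and $M$, because $\mathcal A$ is $C^1$ on a convex-enough neighborhood of the compact range of $Dv$. (If needed, I would shrink to the segment between the points $Dv(x),Dv(y)$: these lie within the annulus $\{1/2\le|\xi|\le3/2\}$ once $|x-y|$ is small, since $|Dv(x)-Dv(y)|\le M|x-y|^\alpha$. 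For distant points I would use the trivial bound $2\sup|D\mathcal A|$.)

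\emph{Step 2: first derivatives.} The key step is to show $v\in W^{2,2}_{\loc}$ and that $w=\partial_k v$ weakly solves
\[
\operatorname{div}(a^{ij}D_jw)=\partial_k g .
\]
I would obtain this by the difference quotient method. The quotient $\Delta_h^k v$ satisfies $\operatorname{div}(a_h\,D\Delta_h^kv)=\Delta_h^k g$, where
\[
a_h=\int_0^1 D_\xi\mathcal A\bigl(Dv+s(Dv(\cdot+he_k)-Dv)\bigr)\,ds .
\]
For small $h$ the argument stays in $\mathcal K_*$, since $|Dv(x+he_k)-Dv(x)|\le M h^\alpha<1/4$. Caccioppoli plus uniform ellipticity then give uniform $W^{1,2}$ bounds on the difference quotients. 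This is the step I expect to be the main technical obstacle: keeping the mean-value coefficients inside the ellipticity region and handling $g\in C^{L-1,\alpha}$. For $L=1$ we only have $g\in C^{0,\alpha}$, so $\partial_k g$ must be interpreted distributionally, as the divergence of $g e_k$. This is allowed in the divergence-form Schauder estimate with a $C^{0,\alpha}$ right-hand side in divergence form. With that, $w\in C^{1,\alpha}$ on a smaller ball, with norm bounded by $C(\|w\|_{C^0}+\|g\|_{C^{0,\alpha}})$, and $\|w\|_{C^0}\le M$. Hence $v\in C^{2,\alpha}(B_{r_1})$, and a bound on $\|v\|_{C^0}$ handles the zeroth order.

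\emph{Step 3: induction on $L$.} Suppose $v\in C^{\ell+1,\alpha}$ on a ball with $\ell<L$. Then $a^{ij}=D\mathcal A(Dv)\in C^{\ell,\alpha}$, with norm depending on $\|\mathcal A\|_{C^{\ell+1}}$ by the chain rule and the Faà di Bruno bounds. Also $\partial_k g\in C^{L-2,\alpha}$ is available as long as $\ell\le L-1$. The higher-order divergence-form Schauder estimate (Gilbarg–Trudinger, Giaquinta–Martinazzi) applied to $w$ gives $w\in C^{\ell+1,\alpha}$, i.e.\ $v\in C^{\ell+2,\alpha}$. The estimate is monotone in the previous norms, hence increasing in $M$, $G$ and $\|v\|_{C^0}$. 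Each step uses a slightly smaller radius; I would choose radii $1+2^{-\ell}$ decreasing to $1$, so that after $L$ steps we land on $B_1$. Finiteness and monotonicity of $C$ follow by tracking the recursive constants.

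\emph{Step 4: the smooth case.} If $\mathcal A$ and $g$ are smooth, apply the result for each $L$ on balls centered at arbitrary points of $B_2$ with rescaled radii. The hypotheses are invariant under the rescaling $v_r(y)=v(x_0+ry)/r$, up to harmless changes in $M$ and $G$. This gives $v\in C^\infty(B_2)$.
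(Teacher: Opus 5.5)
Your proposal is correct and follows essentially the paper's route: difference quotients, with the mean-value coefficients kept in $\mathcal K_*$ for small $|h|$, give $W^{2,2}_{\loc}$; the divergence-form Schauder estimate for $\partial_k v$ with right-hand side $\operatorname{div}(g e_k)$ then gives $C^{2,\alpha}$; and the estimate is iterated on radii $1+2^{-s}$. The differences are cosmetic (the paper writes $g=\Delta\psi$ before taking difference quotients, where you would move the quotient onto the test function, and it runs the higher-order steps on the nondivergence form $a_{ij}(Dv)v_{ij}=g$ rather than on the divergence-form equation for $w$), and your one slip is that $C^{\ell,\alpha}$ control of $D_\xi\mathcal A(Dv)$ needs $\|\mathcal A\|_{C^{\ell+2}}$ rather than $\|\mathcal A\|_{C^{\ell+1}}$ ($C^2$, not $C^1$, in Step 1), which is harmless because $\ell\le L-1$ and $\mathcal A\in C^{L+1}$.
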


\begin{proof}
We first justify differentiating the weak equation.  Fix concentric
balls $B''\Subset B'\Subset B_2$, and let $\psi$ solve
$\Delta\psi=g$ in $B'$ with zero boundary values.  Schauder estimates
on the ball give
\[
 \|\psi\|_{C^{2,\alpha}(B')}
 \le C\|g\|_{C^{0,\alpha}(B')}.
\]
Thus \eqref{eq:general-flux-equation} may be written
\[
 \operatorname{div}\bigl(\mathcal A(Dv)-D\psi\bigr)=0
 \qquad\text{in }B'.
\]
Let $\delta_h$ denote the difference quotient in the direction
$e_\ell$.  Subtracting the translated equations yields
\begin{equation}\label{eq:difference-quotient-equation}
 \operatorname{div}\bigl(a_h\delta_hDv\bigr)
 =\operatorname{div}\bigl(\delta_hD\psi\bigr),
\end{equation}
where
\[
 a_h(x):=\int_0^1D_\xi\mathcal A\bigl((1-t)Dv(x)
             +tDv(x+he_\ell)\bigr)\,dt.
\]
For $|h|$ sufficiently small, the H\"older continuity of $Dv$ and
the first two bounds in \eqref{eq:bootstrap-hypotheses} ensure that
every segment in this integral lies in $\mathcal K_*$.  Hence $a_h$
is uniformly bounded and elliptic, independently of $h$.  Testing
\eqref{eq:difference-quotient-equation} with
$\zeta^2\delta_hv$, where $\zeta$ is a cutoff for $B''\Subset B'$,
gives
\[
 \int_{B''}|\delta_hDv|^2
 \le C\int_{B'}|\delta_hv|^2
     +C\int_{B'}|\delta_hD\psi|^2
 \le C\int_{B'}|Dv|^2+C\int_{B'}|D^2\psi|^2.
\]
The difference-quotient characterization of Sobolev spaces therefore
gives $v\in W^{2,2}_{\loc}(B_2)$.

Set $w_\ell=\partial_\ell v$.  Taking the $\ell$-th weak derivative
of \eqref{eq:general-flux-equation} now gives
\begin{equation}\label{eq:linearized-divergence-equation}
 \operatorname{div}\bigl(D_\xi\mathcal A(Dv)Dw_\ell\bigr)
 =\partial_\ell g
 =\operatorname{div}(g e_\ell)
 \quad\text{weakly on smaller balls}.
\end{equation}
The coefficients are uniformly elliptic and belong to
$C^{0,\alpha}$, while $g e_\ell\in C^{0,\alpha}$.
The divergence-form Schauder estimate
\cite[Theorem~8.32]{GilbargTrudinger} yields
$w_\ell\in C^{1,\alpha}_{\loc}$ and hence
$v\in C^{2,\alpha}_{\loc}(B_2)$, with a quantitative estimate on
concentric balls.  Equation \eqref{eq:general-flux-equation} now holds
pointwise in nondivergence form:
\begin{equation}\label{eq:bootstrap-nondivergence}
 a_{ij}(Dv)v_{ij}=g,
 \qquad a_{ij}(\xi)=\partial_{\xi_j}\mathcal A_i(\xi),
 \qquad v_{ij}=\partial_{ij}v.
\end{equation}

The bounds in \eqref{eq:bootstrap-hypotheses} control the
$C^{0,\alpha}$ norm of $a_{ij}(Dv)$ and its ellipticity constants.
The interior nondivergence-form Schauder estimate
\cite[Theorem~6.2]{GilbargTrudinger} gives the quantitative
$C^{2,\alpha}$ bound on $B_{\rho_1}$, where
$\rho_s:=1+2^{-s}$.  If $1\le s\le L-1$, differentiating
\eqref{eq:bootstrap-nondivergence} $s$ times and applying the same
estimate on the next smaller ball gives inductively
\[
 \|v\|_{C^{s+2,\alpha}(B_{\rho_{s+1}})}
 \le C_s,
 \qquad 1\le s\le L-1.
\]
At the $s$-th step the new right-hand side contains $D^s g$ and
products of derivatives already controlled at earlier steps.  Thus
$g\in C^{L-1,\alpha}$ is exactly the finite regularity needed to reach
$C^{L+1,\alpha}$ on $B_1$.  This proves
\eqref{eq:bootstrap-conclusion}.
\end{proof}

The quantity $H$ in \eqref{eq:H-def} is the local
$C^{0,\alpha}$ seminorm of the gradient on the intermediate domain
$U$; it determines the intrinsic radius and is part of the constants
below.

\begin{proposition}[Intrinsic derivative estimates]
\label{prop:intrinsic}
Under the assumptions of Theorem \ref{thm:main}, there exist $q_0>0$
and $C_L<\infty$ such that
\begin{equation}\label{eq:u-derivative-estimate}
 |D^\ell u(x)|
 \le C_L|Du(x)|^{1-(\ell-1)/\alpha},
 \qquad 2\le\ell\le L+1,
\end{equation}
whenever $x\in K\setminus\crit(u)$ and $0<|Du(x)|\le q_0$.  The
constants may be chosen to depend only on
\[
 d,p,\alpha,L,\dist(K,\partial U),H,
 \quad\text{and}\quad \|f\|_{C^{L-1,\alpha}(U)}.
\]
\end{proposition}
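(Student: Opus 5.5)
Fix $x_0\in K\setminus\crit(u)$ with $q:=|Du(x_0)|\le q_0$. We work at the intrinsic radius
\[
 r:=c\,(q/H)^{1/\alpha},
\]
with a small absolute constant $c$, for example $c=(1/8)^{1/\alpha}$, so that $Hr^\alpha\le q/8$ after enlarging if necessary. We choose $q_0$ so small that $2r\le\dist(K,\partial U)$ for every $q\le q_0$.

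On $B_{2r}(x_0)$ the Hölder bound then gives $|Du(y)-Du(x_0)|\le H(2r)^\alpha\le q/4$. Hence $\tfrac34 q\le|Du|\le\tfrac54 q$ there. We rescale by setting
\[
 v(y):=\frac{u(x_0+ry)-u(x_0)}{rq},\qquad y\in B_2,
\]
so that $Dv(y)=Du(x_0+ry)/q$. Then $\tfrac34\le|Dv|\le\tfrac54$ and $\|v\|_{C^0(B_2)}\le 2$. The Hölder seminorm satisfies $[Dv]_{C^{0,\alpha}(B_2)}=Hr^\alpha/q\le c^\alpha$, which is a universal constant, so $M$ is bounded.

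Homogeneity of the flux gives $\operatorname{div}(|Dv|^{p-2}Dv)=g$, where
\[
 g(y):=\frac{r}{q^{p-1}}f(x_0+ry).
\]
The flux $\mathcal A(\xi)=|\xi|^{p-2}\xi$ is smooth near $\mathcal K_*$, with ellipticity constants depending only on $p$.

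The key step is to bound $\|g\|_{C^{L-1,\alpha}(B_2)}$ uniformly. Each derivative of $g$ of order $s$ carries the factor $r^{1+s}q^{1-p}$, and the Hölder seminorm brings at most an extra factor of $r^\alpha$. Since $r\lesssim 1$, it suffices to control
\[
 r\,q^{1-p}\simeq q^{1/\alpha-(p-1)}H^{-1/\alpha}.
\]
This quantity is bounded for $q\le q_0\le1$ precisely because $1/\alpha\ge p-1$, which is where the hypothesis \eqref{eq:alpha-choice} enters. Thus $G\lesssim \|f\|_{C^{L-1,\alpha}(U)}\,H^{-1/\alpha}$, with harmless powers of $\dist(K,\partial U)$ entering through $q_0$.

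I expect this bookkeeping to be the main point: it shows why the condition $\alpha\le 1/(p-1)$ is needed. One also has to check that the dependence on $H$ is acceptable. Since $H>0$ is fixed and part of the allowed constants, that is fine.

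Lemma \ref{lem:schauder-bootstrap} then yields $\|v\|_{C^{L+1,\alpha}(B_1)}\le C$. For $\ell\ge2$ we have $D^\ell u(x_0)=q\,r^{1-\ell}D^\ell v(0)$, so
\[
 |D^\ell u(x_0)|\le C\,q\,r^{1-\ell}=C\,q\,\bigl(c(q/H)^{1/\alpha}\bigr)^{1-\ell}=C_L\,q^{1-(\ell-1)/\alpha},
\]
which is \eqref{eq:u-derivative-estimate}. The constants depend only on the stated quantities.
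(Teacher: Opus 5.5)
Your proof is correct and follows the paper's argument essentially step by step: your radius $r=8^{-1/\alpha}(q/H)^{1/\alpha}$ is exactly the paper's $(q/(8H))^{1/\alpha}$. The normalization $v$, the annulus bound, the control of $rq^{1-p}\lesssim q^{1/\alpha-(p-1)}$ via $\alpha\le 1/(p-1)$, the appeal to Lemma~\ref{lem:schauder-bootstrap}, and the scaling identity $D^\ell u(x_0)=q\,r^{1-\ell}D^\ell v(0)$ all coincide with the paper's proof. There are two cosmetic slips, neither affecting the argument: the correct bound is $|v|\le 5/2$ (not $2$) on $B_2$, and $[Dv]_{C^{0,\alpha}(B_2)}\le Hr^\alpha/q$ is an inequality rather than an equality.
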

\begin{proof}
Fix $x\in K\setminus\crit(u)$ and write
\[
 q:=|Du(x)|>0.
\]
Choose
\begin{equation}\label{eq:intrinsic-radius}
 r:=\left(\frac{q}{8H}\right)^{1/\alpha}.
\end{equation}
Put $\delta=\dist(K,\partial U)>0$.  It is enough to choose
\begin{equation}\label{eq:q0-choice}
 0<q_0\le
 \min\{1,8H,8H(\delta/2)^\alpha\}.
\end{equation}
Then $q\le q_0$ implies $r\le1$ and $2r\le\delta$, so
$B_{2r}(x)\Subset U$.  Since $0<\alpha<1$, for $z\in B_{2r}(x)$ we have
\[
 |Du(z)-Du(x)|
 \le H(2r)^\alpha
 \le\frac q4.
\]
Therefore
\begin{equation}\label{eq:gradient-comparability}
 \frac{3q}{4}\le |Du(z)|\le\frac{5q}{4}
 \qquad (z\in B_{2r}(x)).
\end{equation}

Define the normalized function
\begin{equation}\label{eq:v-def}
 v(y):=\frac{u(x+ry)-u(x)}{rq},
 \qquad y\in B_2.
\end{equation}
Then
\begin{equation}\label{eq:Dv-bounds}
 \frac34\le|Dv|\le\frac54
 \quad\text{in }B_2,
 \qquad
 [Dv]_{C^{0,\alpha}(B_2)}
 =\frac{r^\alpha}{q}[Du]_{C^{0,\alpha}(B_{2r}(x))}
 \le\frac18.
\end{equation}
The rescaled equation is
\begin{equation}\label{eq:v-divergence}
 \operatorname{div}\bigl(|Dv|^{p-2}Dv\bigr)=g(y),
 \qquad
 g(y):=rq^{1-p}f(x+ry).
\end{equation}
The rescaled right-hand side is uniformly controlled because of
\eqref{eq:alpha-choice}.  Indeed,
\begin{equation}\label{eq:forcing-prefactor}
 rq^{1-p}
 =(8H)^{-1/\alpha}q^{1/\alpha-(p-1)},
\end{equation}
which is bounded for $0<q\le1$.  For every integer $s\ge0$ and every
multi-index $\gamma$ of length $s$,
\begin{equation}\label{eq:g-derivatives}
 D_y^\gamma g(y)=r^{s+1}q^{1-p}D^\gamma f(x+ry).
\end{equation}
Because $1/\alpha-(p-1)\ge0$, for $0\le s\le L-1$,
\begin{align*}
 \|D_y^\gamma g\|_{L^\infty(B_2)}
 &\le C r^{s+1}q^{1-p}
  \le Cq^{(s+1)/\alpha-(p-1)}\le C,\\
 [D_y^\gamma g]_{C^{0,\alpha}(B_2)}
 &\le C r^{s+1+\alpha}q^{1-p}
  \le Cq^{(s+1+\alpha)/\alpha-(p-1)}\le C.
\end{align*}
Here and below $C$ may depend on $H$, $p$, $\alpha$, $L$, and
$\|f\|_{C^{L-1,\alpha}(U)}$, but it is independent of $x$ and $q$.
Consequently,
\begin{equation}\label{eq:g-uniform-holder}
 \|g\|_{C^{L-1,\alpha}(B_2)}\le C.
\end{equation}

Since $v(0)=0$ and $|Dv|\le5/4$, for every $y\in B_2$ we have
\begin{equation}\label{eq:v-c0-bound}
 |v(y)|=|v(y)-v(0)|
 \le |y|\sup_{B_2}|Dv|\le\frac52.
\end{equation}
Set $A_0(\xi)=|\xi|^{p-2}\xi$.  On $\mathcal K_*$ the map $A_0$ is
smooth and $D_\xi A_0$ is uniformly elliptic, with constants depending
only on $p$.  Since $v\in C^{1,\alpha}(B_2)$ and satisfies the rescaled
equation weakly, Lemma \ref{lem:schauder-bootstrap}, applied with
$\mathcal A=A_0$ and with \eqref{eq:Dv-bounds} and
\eqref{eq:g-uniform-holder}, yields
\begin{equation}\label{eq:v-high-estimate}
 \|v\|_{C^{L+1,\alpha}(B_1)}\le C_L.
\end{equation}
The constant is independent of $q$.  Finally, differentiating
\eqref{eq:v-def} gives
\[
 D^\ell_yv(0)=\frac{r^{\ell-1}}qD^\ell u(x).
\]
Combining this identity with \eqref{eq:v-high-estimate} and
\eqref{eq:intrinsic-radius}, we obtain
\[
 |D^\ell u(x)|
 \le C_Lqr^{1-\ell}
 \le C_Lq^{1-(\ell-1)/\alpha},
\]
which is \eqref{eq:u-derivative-estimate}.
\end{proof}

\begin{remark}[Finite regularity of the datum]
To prove Proposition \ref{prop:intrinsic} through order $L+1$, it is
enough to assume $f\in C^{L-1,\alpha}_{\loc}$.  The assumption
$f\in C^\infty$ is used only to state all orders simultaneously.
\end{remark}

\section{Homogeneous compositions and proof of the main theorem}
\label{sec:composition}

We now apply the intrinsic estimates to homogeneous functions of the
gradient.

\begin{proposition}[Derivative decay for homogeneous compositions]
\label{prop:composition}
Let $\Phi$ be as in Theorem \ref{thm:main}.  For every $L\in\N$ there
exist $q_0>0$ and $C_L<\infty$ such that
\begin{equation}\label{eq:composition-estimate}
 \left|D^j\bigl(\Phi(Du)\bigr)(x)\right|
 \le C_L|Du(x)|^{m-j/\alpha},
 \qquad 0\le j\le L,
\end{equation}
whenever $x\in K\setminus\crit(u)$ and
$0<|Du(x)|\le q_0$.
In addition to the dependencies in Proposition \ref{prop:intrinsic},
$C_L$ depends only on the $C^L$ norm of $\Phi$ on
$\{3/4\le|\xi|\le5/4\}$.
\end{proposition}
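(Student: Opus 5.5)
The plan is to work in the same normalized picture as in the proof of Proposition \ref{prop:intrinsic}. Fix $x\in K\setminus\crit(u)$ with $q=|Du(x)|\le q_0$, where $q_0$ is chosen as in \eqref{eq:q0-choice}. Let $r=(q/8H)^{1/\alpha}$ and let $v$ be given by \eqref{eq:v-def}. Then $Dv(y)=q^{-1}Du(x+ry)$, so homogeneity \eqref{eq:homogeneity} gives the exact identity
\[
 \Phi\bigl(Du(x+ry)\bigr)=q^m\,\Phi\bigl(Dv(y)\bigr),\qquad y\in B_2 .
\]
Differentiating $j$ times in $y$ and evaluating at $y=0$ yields
\[
 r^j\,D^j\bigl(\Phi(Du)\bigr)(x)=q^m\,D^j_y\bigl(\Phi(Dv)\bigr)(0).
\]
Consequently it suffices to show that $|D^j_y(\Phi(Dv))(0)|\le C_L$ uniformly in $x$ and $q$. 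Indeed, once that is known,
\[
 |D^j(\Phi(Du))(x)|\le C_L\,q^m r^{-j}=C_L(8H)^{j/\alpha}q^{m-j/\alpha},
\]
which is \eqref{eq:composition-estimate}.

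For the uniform bound, we apply \eqref{eq:v-high-estimate} from Proposition \ref{prop:intrinsic} with $L$ in place of $L$. This gives $\|v\|_{C^{L+1,\alpha}(B_1)}\le C_L$, and hence $\|D^iDv\|_{L^\infty(B_1)}\le C_L$ for $0\le i\le L$. By \eqref{eq:Dv-bounds}, $Dv$ takes values in the annulus $\{3/4\le|\xi|\le5/4\}$, where $\Phi$ is smooth with finite $C^L$ norm. We then expand $D^j_y(\Phi\circ Dv)$ by the chain rule (Fa\`a di Bruno). Each term has the form $(D^i\Phi)(Dv)$ applied to products $D^{\beta_1}Dv\otimes\cdots\otimes D^{\beta_i}Dv$ with $\sum|\beta_s|=j$ and $|\beta_s|\ge1$. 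Every factor is bounded: the $\Phi$-factor by the $C^L$ norm of $\Phi$ on the annulus, and the $Dv$-factors by $C_L$. This gives the required uniform bound, with dependence on exactly the stated quantities.

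The case $j=0$ is immediate from homogeneity: $|\Phi(Du(x))|=q^m|\Phi(Du(x)/q)|\le q^m\sup_{|\xi|=1}|\Phi|$.

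There is no real obstacle here. The one point to check is that the smoothness of $u$ near $x$ (Lemma \ref{lem:noncritical-smoothness}) justifies the classical chain rule. The other is that the estimate is applied only at the center $y=0$, which lies inside $B_1$, so only the interior bound \eqref{eq:v-high-estimate} is needed.
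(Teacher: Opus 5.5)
Your proposal is correct and follows the paper's proof essentially step for step. It uses the same normalization $v$, the homogeneity identity $\Phi(Du(x+ry))=q^m\Phi(Dv(y))$, the Fa\`a di Bruno bound at $y=0$ from \eqref{eq:v-high-estimate} together with the annulus bound $3/4\le|Dv|\le5/4$, and the scaling identity $r^jD^j(\Phi(Du))(x)=q^mD^j_y(\Phi(Dv))(0)$, which yields $C_L(8H)^{j/\alpha}q^{m-j/\alpha}$.
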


\begin{proof}
Let $q$, $r$, and $v$ be as in the proof of Proposition
\ref{prop:intrinsic}.  By homogeneity,
\begin{equation}\label{eq:composition-rescaling}
 \Phi(Du(x+ry))=\Phi(qDv(y))=q^m\Phi(Dv(y)).
\end{equation}
The estimate \eqref{eq:v-high-estimate}, the bounds
$3/4\le|Dv|\le5/4$, and the smoothness of $\Phi$ on the unit sphere
give the required uniform chain-rule bounds.  More explicitly, for
$j\ge1$, every term in the multivariate Fa\`a di Bruno expansion of
$D_y^j(\Phi(Dv))$ has the form
\begin{equation}\label{eq:faa-composition}
 D^s\Phi(Dv)
 \bigl[D^{\ell_1+1}v,\ldots,D^{\ell_s+1}v\bigr],
 \qquad
 \ell_i\ge1,\quad \ell_1+\cdots+\ell_s=j,
\end{equation}
for some $1\le s\le j$, with harmless combinatorial coefficients.
The derivatives of $\Phi$ are bounded on
$\{3/4\le|\xi|\le5/4\}$, and every derivative of $v$ occurring in
\eqref{eq:faa-composition} has order at most $j+1\le L+1$.
Consequently, \eqref{eq:v-high-estimate} implies
\begin{equation}\label{eq:normalized-composition-bound}
 \left|D_y^j\bigl(\Phi(Dv)\bigr)(0)\right|\le C_L,
 \qquad 0\le j\le L.
\end{equation}
On the other hand, differentiating \eqref{eq:composition-rescaling}
$j$ times at $y=0$ gives the exact scaling identity
\[
 r^jD_x^j\bigl(\Phi(Du)\bigr)(x)
 =q^mD_y^j\bigl(\Phi(Dv)\bigr)(0).
\]
Therefore
\[
 \left|D_x^j\bigl(\Phi(Du)\bigr)(x)\right|
 \le C_Lq^mr^{-j}
 =C_L(8H)^{j/\alpha}q^{m-j/\alpha}.
\]
Absorbing the fixed factor into $C_L$ proves
\eqref{eq:composition-estimate}.
\end{proof}

\begin{proof}[Proof of Theorem \ref{thm:main}]
Set
\[
 F(x):=\Phi(Du(x)),
 \qquad F=0\text{ on }\crit(u).
\]
Since $m>0$ and $\Phi$ is continuous on the unit sphere,
\[
 |\Phi(\xi)|\le
 \left(\sup_{|\theta|=1}|\Phi(\theta)|\right)|\xi|^m
 \qquad(\xi\ne0).
\]
Thus the definition at the critical set agrees with continuity of $Du$,
and $F\in C(\Omega)$.  To verify smoothness away from $\crit(u)$, fix
$x_*\notin\crit(u)$ and put $q_*=|Du(x_*)|$.  By continuity of $Du$,
there is $\rho>0$ such that $B_{2\rho}(x_*)\Subset\Omega$ and
$3q_*/4\le|Du|\le5q_*/4$ there.  The normalization
\[
 \widetilde v(y)=\frac{u(x_*+\rho y)-u(x_*)}{\rho q_*}
\]
satisfies the hypotheses of Lemma \ref{lem:noncritical-smoothness},
with flux $A_0(\xi)=|\xi|^{p-2}\xi$ and smooth right-hand side
$\rho q_*^{1-p}f(x_*+\rho y)$.  Applying that lemma for every $L$
shows that $u$, and therefore $F$, is smooth in a neighborhood of
$x_*$.

Fix $K\Subset U\Subset\Omega$, use the notation of
\eqref{eq:H-def}, and put
$\delta=\dist(K,\partial U)$.  If
$x\in K\setminus\crit(u)$ and
$\dist(x,\crit(u))<\delta/2$, a minimizing sequence for the distance
from $x$ to $\crit(u)$ remains in $U$.  Since $Du=0$ on
$\crit(u)$, H\"older continuity and passage to the infimum give
\begin{equation}\label{eq:q-vs-distance}
 |Du(x)|\le H\dist(x,\crit(u))^\alpha.
\end{equation}
In particular, after restricting to
$\dist(x,\crit(u))<\rho_K$ with $\rho_K>0$ sufficiently small, we have
$|Du(x)|\le q_0$, where $q_0$ is the constant in Proposition
\ref{prop:composition}.

For $0\le j\le k$, Proposition \ref{prop:composition} and
\eqref{eq:q-vs-distance} imply
\begin{equation}\label{eq:flatness-composition}
 |D^jF(x)|
 \le C|Du(x)|^{m-j/\alpha}
 \le C\dist(x,\crit(u))^{\alpha m-j}.
\end{equation}
Here the exponent $m-j/\alpha$ is positive because
$m>k/\alpha$ and $j\le k$.  Since $\alpha m>k$, estimate
\eqref{eq:flatness-composition} verifies the hypotheses of Lemma
\ref{lem:extension} with $Z=\crit(u)$ and $\mu=\alpha m$.  The lemma
therefore gives $F\in C^k_{\loc}(\Omega)$ and
$D^jF=0$ on $\crit(u)$ for every $0\le j\le k$.  The quantitative
estimate follows from Proposition \ref{prop:composition} with $L=k$.
\end{proof}

\begin{proof}[Proof of Corollary \ref{cor:power}]
Take $\Phi(\xi)=|\xi|^m$, which is smooth on
$\R^d\setminus\{0\}$ and positively homogeneous of degree $m$.
Theorem \ref{thm:main} gives \eqref{eq:power-ck}.  For $k=1$, choose
\[
 \alpha=\min\left\{\alpha_0,\frac1{p-1}\right\}.
\]
This gives \eqref{eq:power-c1}.  Formula
\eqref{eq:power-derivative} follows from the classical chain rule away
from $\crit(u)$ and from the vanishing statement on $\crit(u)$.
\end{proof}

\section{Autonomous anisotropic operators}
\label{sec:anisotropic}

The proof uses homogeneity and ellipticity on annuli, not the radial
form of the $p$-Laplace vector field.  We formulate the corresponding
extension.

Let $A\in C^\infty(\R^d\setminus\{0\};\R^d)$ satisfy
\begin{equation}\label{eq:A-homogeneous}
 A(t\xi)=t^{p-1}A(\xi),
 \qquad t>0, \xi\ne0.
\end{equation}
Since $p-1>0$, we extend $A$ continuously to the origin by setting
$A(0)=0$.
Assume that there are constants $0<\lambda\le\Lambda<\infty$ such that
for $|\xi|=1$ and $\eta\in\R^d$,
\begin{equation}\label{eq:A-ellipticity}
 \lambda|\eta|^2
 \le \langle D_\xi A(\xi)\eta,\eta\rangle,
 \qquad
 |D_\xi A(\xi)|\le\Lambda.
\end{equation}

\begin{theorem}[Anisotropic homogeneous equations]
\label{thm:anisotropic}
Let $k\in\N$ and let $u\in W^{1,p}_{\loc}(\Omega)$ solve
\begin{equation}\label{eq:A-equation}
 \operatorname{div}A(Du)=f
 \qquad\text{in }\Omega,
\end{equation}
where $f\in C^\infty(\Omega)$.  Suppose that
$Du\in C^{0,\alpha_0}_{\loc}(\Omega)$ and choose $\alpha$ as in
\eqref{eq:alpha-choice}.  If
$\Phi\in C^\infty(\R^d\setminus\{0\};\R^N)$ is positively homogeneous
of degree $m$ and $m>k/\alpha$, then
\[
 \Phi(Du)\in C^k_{\loc}(\Omega;\R^N).
\]
The conclusions \eqref{eq:vanishing-on-critical-set} and
\eqref{eq:composition-quantitative-intro} also hold.
\end{theorem}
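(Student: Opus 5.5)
The plan is to repeat the scalar argument of Sections~\ref{sec:intrinsic} and~\ref{sec:composition} verbatim, with the flux $A_0(\xi)=|\xi|^{p-2}\xi$ replaced by $A$. The one new ingredient is that the normalization $v(y)=(u(x+ry)-u(x))/(rq)$ must turn \eqref{eq:A-equation} into an equation of the same type with a controlled flux. By \eqref{eq:A-homogeneous},
\[
 A(Du(x+ry))=A(qDv(y))=q^{p-1}A(Dv(y)).
\]
Hence $\operatorname{div}_y A(Dv)=g(y):=rq^{1-p}f(x+ry)$, which is exactly the same right-hand side as in \eqref{eq:v-divergence}. This is where autonomy and homogeneity are used: the rescaled flux is the same map $A$ for every $x$ and every $q$.

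The first step is to check the hypotheses of Lemma~\ref{lem:schauder-bootstrap} for $\mathcal A=A$ on $\mathcal K_*$. The map $A$ is smooth on $\R^d\setminus\{0\}$, which is an open neighborhood of $\mathcal K_*$. Differentiating \eqref{eq:A-homogeneous} shows that $D_\xi A$ is homogeneous of degree $p-2$. Therefore, for $\xi\in\mathcal K_*$,
\[
 \langle D_\xi A(\xi)\eta,\eta\rangle=|\xi|^{p-2}\langle D_\xi A(\xi/|\xi|)\eta,\eta\rangle .
\]
By \eqref{eq:A-ellipticity}, this lies between $\min\{(1/2)^{p-2},(3/2)^{p-2}\}\lambda|\eta|^2$ and $\max\{\cdots\}\Lambda|\eta|^2$. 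The same scaling bounds $\|A\|_{C^{L+1}(\mathcal K_*)}$ by a constant depending on $p$, $L$ and the $C^{L+1}$ norm of $A$ on the unit sphere.

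With these constants fixed, I would rerun the proof of Proposition~\ref{prop:intrinsic} word for word. The choice of $r$ in \eqref{eq:intrinsic-radius}, the comparability \eqref{eq:gradient-comparability}, the bounds \eqref{eq:Dv-bounds}, the forcing estimate \eqref{eq:g-uniform-holder} (which uses $\alpha\le1/(p-1)$), and the $C^0$ bound \eqref{eq:v-c0-bound} involve only $u$, $f$ and $H$, not the flux. Lemma~\ref{lem:schauder-bootstrap} then yields \eqref{eq:v-high-estimate} with a constant that is independent of $x$ and $q$. Consequently \eqref{eq:u-derivative-estimate} holds, now with constants also depending on $A$.

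Proposition~\ref{prop:composition} depends only on \eqref{eq:v-high-estimate} and on the homogeneity of $\Phi$, so it transfers unchanged and gives \eqref{eq:composition-quantitative-intro}. The proof of Theorem~\ref{thm:main} then goes through as written. The continuity of $\Phi(Du)$, including the value $\Phi(0)=0$, is unchanged. Local smoothness away from $\crit(u)$ follows from Lemma~\ref{lem:noncritical-smoothness}, applied to the normalization at scale $\rho$ with flux $A$; this is again legitimate by the homogeneity identity. The distance bound \eqref{eq:q-vs-distance} is unchanged. Finally, Lemma~\ref{lem:extension} with $\mu=\alpha m>k$ gives $C^k$ regularity together with \eqref{eq:vanishing-on-critical-set}.

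The main point to be careful about is the initial regularity step inside Lemma~\ref{lem:schauder-bootstrap}. The difference-quotient argument there needs the segments between $Dv(y)$ and $Dv(y+he_\ell)$ to stay in $\mathcal K_*$, where $A$ is smooth and elliptic. This is guaranteed by $3/4\le|Dv|\le5/4$ and the H\"older bound, exactly as in the isotropic case. No convexity or ellipticity of $A$ near the origin is ever needed, because the rescaled gradient never approaches $0$. Everything else is bookkeeping of constants.
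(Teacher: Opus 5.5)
Your proposal is correct and follows the paper's proof essentially line by line: $(p-1)$-homogeneity of $A$ turns the normalized equation into $\operatorname{div}A(Dv)=rq^{1-p}f(x+ry)$, which has the same forcing as in the scalar case. The $(p-2)$-homogeneity of $D_\xi A$ gives uniform ellipticity and $C^{L+1}$ bounds on $\mathcal K_*$, so Lemma~\ref{lem:schauder-bootstrap} applies with $\mathcal A=A$, and the composition estimate, the distance bound \eqref{eq:q-vs-distance}, and Lemma~\ref{lem:extension} then conclude exactly as in Theorem~\ref{thm:main}.
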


\begin{proof}
Fix $K\Subset U\Subset\Omega$ and let
$H=[Du]_{C^{0,\alpha}(U)}$.  For
$H=0$ the gradient is constant on each connected component and the
conclusion is immediate, so assume $H>0$.  For
$x\in K\setminus\crit(u)$ with $q=|Du(x)|$ small, choose $r$ and $v$
as in \eqref{eq:intrinsic-radius} and \eqref{eq:v-def}.  The argument
in \eqref{eq:gradient-comparability}--\eqref{eq:Dv-bounds} uses only
the H\"older continuity of $Du$ and therefore still gives
\[
 \frac34\le |Dv|\le\frac54,
 \qquad [Dv]_{C^{0,\alpha}(B_2)}\le\frac18.
\]
Since $Du(x+ry)=qDv(y)$, the $(p-1)$-homogeneity of $A$ gives
$A(Du(x+ry))=q^{p-1}A(Dv(y))$.  Changing variables in the weak
formulation consequently yields
\begin{equation}\label{eq:A-rescaled}
 \operatorname{div}A(Dv)=rq^{1-p}f(x+ry).
\end{equation}

Differentiating \eqref{eq:A-homogeneous} shows that
$D_\xi A(t\xi)=t^{p-2}D_\xi A(\xi)$.  Hence
\eqref{eq:A-ellipticity}, compactness of the unit sphere, and smoothness
of $A$ imply uniform ellipticity and uniform bounds for all derivatives
of $D_\xi A$ on $\mathcal K_*$.  Lemma
\ref{lem:schauder-bootstrap}, now with $\mathcal A=A$, applies
directly to \eqref{eq:A-rescaled}, giving both noncritical smoothness
and the quantitative bootstrap.  Its right-hand side is exactly the
function $g$ estimated in
\eqref{eq:forcing-prefactor}--\eqref{eq:g-uniform-holder}.  We obtain
\begin{equation}\label{eq:anisotropic-v-bound}
 \|v\|_{C^{L+1,\alpha}(B_1)}\le C_L
 \qquad\text{for every }L\ge1,
\end{equation}
with constants independent of $x$ and $q$.

The scaling identity
$D_y^\ell v(0)=r^{\ell-1}q^{-1}D_x^\ell u(x)$ now gives
\eqref{eq:u-derivative-estimate}.  Homogeneity of $\Phi$ gives
\eqref{eq:composition-rescaling}, so the proof of Proposition
\ref{prop:composition} yields
\[
 |D^j(\Phi(Du))(x)|\le C_Lq^{m-j/\alpha}.
\]
Finally, the distance-to-the-critical-set argument
\eqref{eq:q-vs-distance}--\eqref{eq:flatness-composition} and Lemma
\ref{lem:extension} prove the asserted $C^k$ extension and vanishing of
all derivatives of order at most $k$ on $\crit(u)$.
\end{proof}

\begin{remark}
Theorem \ref{thm:anisotropic} assumes the gradient Hölder estimate rather
than reproving it.  Classical $C^{1,\alpha_0}$ theories apply to broad
classes of vector fields satisfying the corresponding $p$-growth,
monotonicity, and smoothness hypotheses.
\end{remark}

\section{The vectorial \texorpdfstring{$p$}{p}-Laplace system}
\label{sec:systems}

We next consider vector-valued solutions.  The main additional point
is the strong ellipticity of the linearized system.

Let $M\ge2$, write
$U=(U^1,\ldots,U^M):\Omega\to\R^M$, and identify its gradient
$DU=(\partial_iU^\beta)$ with an element of $\R^{M\times d}$ endowed
with the Frobenius norm.  We consider
\begin{equation}\label{eq:p-system}
 \operatorname{div}\bigl(|DU|^{p-2}DU\bigr)=F
 \qquad\text{in }\Omega,
\end{equation}
where $F:\Omega\to\R^M$ and the divergence is taken row by row.  Thus,
in components,
\[
 \partial_i\bigl(|DU|^{p-2}\partial_iU^\beta\bigr)=F^\beta,
 \qquad 1\le\beta\le M.
\]
The critical set is
\begin{equation}\label{eq:system-critical-set}
 \crit(U):=\{x\in\Omega:DU(x)=0\}.
\end{equation}

\begin{theorem}[Vectorial homogeneous compositions]
\label{thm:system}
Let $k\in\N$, let $F\in C^\infty(\Omega;\R^M)$, and let
$U\in W^{1,p}_{\loc}(\Omega;\R^M)$ be a weak solution of
\eqref{eq:p-system}.  Assume that
\begin{equation}\label{eq:system-holder}
 DU\in C^{0,\alpha_0}_{\loc}
       (\Omega;\R^{M\times d})
\end{equation}
for some $\alpha_0\in(0,1)$.  Choose
\begin{equation}\label{eq:system-alpha}
 0<\alpha<1,
 \qquad
 \alpha\le\min\left\{\alpha_0,\frac1{p-1}\right\}.
\end{equation}
Let
\[
 \Phi\in C^\infty
 \bigl(\R^{M\times d}\setminus\{0\};\R^N\bigr)
\]
be positively homogeneous of degree $m>0$, with respect to positive
scalar multiplication of matrices, and set $\Phi(0)=0$.  If
\begin{equation}\label{eq:system-threshold}
 m>\frac{k}{\alpha},
\end{equation}
then
\[
 \Phi(DU)\in C^k_{\loc}(\Omega;\R^N).
\]
Moreover,
\begin{equation}\label{eq:system-vanishing}
 D^j\bigl(\Phi(DU)\bigr)=0
 \quad\text{on }\crit(U),
 \qquad 0\le j\le k.
\end{equation}
For every $K\Subset\Omega$, there exist $q_0>0$ and $C<\infty$ such
that
\begin{equation}\label{eq:system-composition-estimate}
 \left|D^j\bigl(\Phi(DU)\bigr)(x)\right|
 \le C|DU(x)|^{m-j/\alpha},
 \qquad 0\le j\le k,
\end{equation}
whenever $x\in K\setminus\crit(U)$ and
$0<|DU(x)|\le q_0$.
\end{theorem}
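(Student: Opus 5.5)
The plan is to run the scalar scheme of Sections \ref{sec:intrinsic}--\ref{sec:composition} verbatim. The only new ingredient is a vectorial replacement for Lemma \ref{lem:schauder-bootstrap}. Fix $K\Subset V\Subset\Omega$ (we write $V$ for the intermediate set, since $U$ now denotes the solution) and put $H:=[DU]_{C^{0,\alpha}(V)}$. If $H=0$ the claim is trivial, so assume $H>0$. For $x\in K\setminus\crit(U)$ with $q:=|DU(x)|\le q_0$, set $r:=(q/8H)^{1/\alpha}$ and
\[
 V_x(y):=\frac{U(x+ry)-U(x)}{rq},\qquad y\in B_2 .
\]
Exactly as in \eqref{eq:gradient-comparability}--\eqref{eq:Dv-bounds}, which use only Hölder continuity, we get $3/4\le|DV_x|\le5/4$ and $[DV_x]_{C^{0,\alpha}(B_2)}\le1/8$ on $B_2$. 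We also get $|V_x|\le5/2$ there. By the $(p-1)$-homogeneity of $\mathcal A_0(P)=|P|^{p-2}P$ on $\R^{M\times d}$, the rescaled system is
\[
 \operatorname{div}\mathcal A_0(DV_x)=G(y):=rq^{1-p}F(x+ry).
\]
The computations \eqref{eq:forcing-prefactor}--\eqref{eq:g-uniform-holder} apply componentwise. They use $\alpha\le1/(p-1)$ and give $\|G\|_{C^{L-1,\alpha}(B_2)}\le C$ uniformly in $x$ and $q$.

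The key step is a system version of Lemma \ref{lem:schauder-bootstrap}: if $3/4\le|DV|\le5/4$, $\|DV\|_{C^{0,\alpha}(B_2)}\le M_0$ and $\|G\|_{C^{L-1,\alpha}}\le C$, then $\|V\|_{C^{L+1,\alpha}(B_1)}\le C_L$. Here scalar ellipticity must be replaced by a Legendre(-type) condition for the linearization
\[
 D_P\mathcal A_0(P)=|P|^{p-2}\Bigl(\mathrm{Id}+(p-2)\frac{P\otimes P}{|P|^2}\Bigr),
\]
which acts on $\R^{M\times d}$ with the Frobenius inner product. The quadratic form is $|P|^{p-2}\bigl(|\Xi|^2+(p-2)\langle P,\Xi\rangle^2/|P|^2\bigr)\ge\min\{1,p-1\}|P|^{p-2}|\Xi|^2$. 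So on $\{3/4\le|P|\le5/4\}$ the Legendre condition holds with constants depending only on $p$, and all derivatives of $\mathcal A_0$ are bounded there. I would then repeat the proof of Lemma \ref{lem:schauder-bootstrap}:
\begin{itemize}
\item Subtract a vector Newtonian correction $\Psi$ with $\Delta\Psi=G$ and run the difference-quotient argument. Legendre ellipticity of the averaged coefficients $a_h$ (their segments stay in the annulus) suffices for the energy estimate, giving $V\in W^{2,2}_{\loc}$.
\item Differentiate to obtain the linear system $\operatorname{div}\bigl(D_P\mathcal A_0(DV)\,D\partial_\ell V\bigr)=\operatorname{div}(G e_\ell)$, whose coefficients are $C^{0,\alpha}$ and Legendre elliptic. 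Apply the Schauder theory for linear elliptic systems with Hölder coefficients (Campanato method, \cite{GiaquintaMartinazzi2012}) in place of \cite[Theorem~8.32]{GilbargTrudinger}. This gives $V\in C^{2,\alpha}$.
\item Bootstrap by repeated differentiation, using the interior Schauder estimates for systems at each order.
\end{itemize}
The same normalization at a fixed noncritical point $x_*$ (radius $\rho$, scale $q_*$) gives $U\in C^\infty(\Omega\setminus\crit(U))$.

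Once $\|V_x\|_{C^{L+1,\alpha}(B_1)}\le C_L$ is available, the rest is formal. Homogeneity gives $\Phi(DU(x+ry))=q^m\Phi(DV_x(y))$. The Faà di Bruno bound \eqref{eq:faa-composition}, with $\Phi$ smooth on $\{3/4\le|P|\le5/4\}\subset\R^{M\times d}$, gives $|D_y^j\Phi(DV_x)(0)|\le C$, and scaling yields \eqref{eq:system-composition-estimate}. Continuity of $\Phi(DU)$ with value $0$ on $\crit(U)$ follows from $|\Phi(P)|\le C|P|^m$. Hölder continuity gives $|DU(x)|\le H\dist(x,\crit(U))^\alpha$ near $\crit(U)$, so $|D^j\Phi(DU)|\le C\dist(x,\crit(U))^{\alpha m-j}$. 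Lemma \ref{lem:extension} with $\mu=\alpha m>k$, applied componentwise, then gives $C^k$ regularity and \eqref{eq:system-vanishing}.

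The main obstacle is the vectorial bootstrap. The only genuinely new issue is replacing the De Giorgi/Gilbarg--Trudinger scalar theory by linear Schauder theory for systems. That theory is available under the Legendre (hence Legendre--Hadamard) condition verified above, and no maximum principle is needed since all estimates are interior. I would also check that the $C^{0,\alpha}$ bound on $DV_x$ alone controls the Hölder norm of the coefficients $D_P\mathcal A_0(DV_x)$, which holds because $\mathcal A_0$ is smooth on the annulus.
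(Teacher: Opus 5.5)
Your proposal is correct and follows essentially the same route as the paper: intrinsic normalization at scale $r=(q/8H)^{1/\alpha}$, the Legendre bound $\min\{1,p-1\}|P|^{p-2}|Q|^2$ on the matrix annulus, difference quotients followed by linear Schauder theory for elliptic systems and a bootstrap, then homogeneity, Fa\`a di Bruno, and Lemma \ref{lem:extension} with $\mu=\alpha m$. The only cosmetic difference is in the difference-quotient step, where you subtract a Newtonian correction $\Psi$ and the paper instead keeps the term $\delta_hG$ directly in the energy estimate; either works.
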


\begin{proof}
Fix $K\Subset\mathcal U\Subset\Omega$ and put
\[
 H=[DU]_{C^{0,\alpha}(\mathcal U)}.
\]
The case $H=0$ is immediate.  Let
$x\in K\setminus\crit(U)$, set $q=|DU(x)|$, and, for $q$ sufficiently
small, define
\begin{equation}\label{eq:system-scaling}
 r=\left(\frac{q}{8H}\right)^{1/\alpha},
 \qquad
 V(y)=\frac{U(x+ry)-U(x)}{rq},
 \qquad y\in B_2.
\end{equation}
Exactly as in \eqref{eq:gradient-comparability} and
\eqref{eq:Dv-bounds}, after decreasing $q_0$ in terms of
$\dist(K,\partial\mathcal U)$, we have
\begin{equation}\label{eq:system-annulus}
 \frac34\le |DV|\le\frac54
 \quad\text{in }B_2,
 \qquad
 [DV]_{C^{0,\alpha}(B_2)}\le\frac18.
\end{equation}
The rescaled system is
\begin{equation}\label{eq:system-rescaled}
 \operatorname{div}\bigl(|DV|^{p-2}DV\bigr)=G(y),
 \qquad
 G(y)=rq^{1-p}F(x+ry).
\end{equation}
The restriction $\alpha\le1/(p-1)$ and the computation
\eqref{eq:forcing-prefactor}--\eqref{eq:g-uniform-holder} give, for
every $L\ge1$,
\begin{equation}\label{eq:system-G-bound}
 \|G\|_{C^{L-1,\alpha}(B_2)}\le C_L,
\end{equation}
uniformly in $x$ and $q$.  Since $V(0)=0$ and
$|DV|\le5/4$, also
\begin{equation}\label{eq:system-V-c0}
 \|V\|_{C^0(B_2)}\le\frac52.
\end{equation}

We verify the ellipticity needed for the vectorial bootstrap.  Set
\[
 \mathbb A(P)=|P|^{p-2}P,
 \qquad P\in\R^{M\times d}\setminus\{0\}.
\]
For $P,Q\in\R^{M\times d}$ with $P\ne0$,
\begin{equation}\label{eq:system-flux-derivative}
 D\mathbb A(P)[Q]
 =|P|^{p-2}Q
 +(p-2)|P|^{p-4}(P:Q)P,
\end{equation}
and hence
\begin{align}
 D\mathbb A(P)[Q]:Q
 &=|P|^{p-2}|Q|^2
 +(p-2)|P|^{p-4}(P:Q)^2 \notag\\
 &\ge \min\{1,p-1\}|P|^{p-2}|Q|^2,
 \label{eq:system-strong-ellipticity}
\end{align}
while the same expression is bounded from above by
$\max\{1,p-1\}|P|^{p-2}|Q|^2$.  Thus $D\mathbb A$ is uniformly
strongly elliptic and smooth on the fixed matrix annulus
\[
 \mathbb K_*=\{P\in\R^{M\times d}:1/2\le|P|\le3/2\}.
\]

We first justify the uniform Schauder estimates.  Difference quotients
of \eqref{eq:system-rescaled} satisfy, on the corresponding
$h$-shrunk ball,
\[
 \operatorname{div}\bigl(\mathbb B_h\delta_hDV\bigr)=\delta_hG,
\]
where
\begin{equation}\label{eq:system-difference-coefficient}
 \mathbb B_h(y)
 =\int_0^1D\mathbb A\bigl((1-t)DV(y)
              +tDV(y+he_\ell)\bigr)\,dt.
\end{equation}
For $|h|$ sufficiently small, all matrices on the segment in
\eqref{eq:system-difference-coefficient} lie in $\mathbb K_*$ by
\eqref{eq:system-annulus}.  Testing this system with a cutoff times
$\delta_hV$, using \eqref{eq:system-strong-ellipticity}, and retaining
the term containing $\delta_hG$, we obtain
\[
 \sup_{0<|h|<h_0}
 \|\delta_hDV\|_{L^2(B')}\le C
 \qquad\text{for every }B'\Subset B_2.
\]
Here \eqref{eq:system-G-bound}, with $L=2$, is used to control
$\|\delta_hG\|_{L^2}$.  Therefore
$V\in W^{2,2}_{\loc}(B_2;\R^M)$.  Each
$W_\ell=\partial_\ell V$ then solves a linear divergence-form,
strongly elliptic system with coefficients
$D\mathbb A(DV)\in C^{0,\alpha}$.  Its right-hand side is
$\partial_\ell G=\operatorname{div}(G e_\ell)$.  The interior Schauder
estimate for elliptic systems
\cite[Chapter~5]{GiaquintaMartinazzi2012} gives
$W_\ell\in C^{1,\alpha}_{\loc}$, and hence
$V\in C^{2,\alpha}_{\loc}$.

The system now holds pointwise in the form
\begin{equation}\label{eq:system-nondivergence}
 \partial_{P_i^\gamma}\mathbb A_j^\beta(DV)
 \,\partial_{ij}V^\gamma=G^\beta.
\end{equation}
We may now differentiate either \eqref{eq:system-rescaled} or
\eqref{eq:system-nondivergence}.  At
each stage the highest derivative solves a linear strongly elliptic
system with principal tensor $D\mathbb A(DV)$; all remaining terms are
products of already controlled derivatives of $V$, derivatives of
$G$, and derivatives of $\mathbb A$ on $\mathbb K_*$.  The H\"older
product and composition estimates therefore control the new
right-hand side.  Applying the interior Schauder estimate on a
strictly smaller concentric ball and iterating gives
\begin{equation}\label{eq:system-high-estimate}
 \|V\|_{C^{L+1,\alpha}(B_1)}\le C_L.
\end{equation}
The constants depend on $d,M,p,\alpha,L$, the local H\"older norm of
$DU$, the interior distance, and the indicated norm of $F$, but not
on $x$ or $q$.

Differentiating \eqref{eq:system-scaling} and using
\eqref{eq:system-high-estimate} gives
\begin{equation}\label{eq:system-U-derivatives}
 |D^\ell U(x)|
 \le C_Lq^{1-(\ell-1)/\alpha},
 \qquad 2\le\ell\le L+1.
\end{equation}
The homogeneity of $\Phi$ gives the exact identity
\[
 \Phi(DU(x+ry))=q^m\Phi(DV(y)).
\]
The same multivariate chain-rule argument as in Proposition
\ref{prop:composition} consequently yields
\eqref{eq:system-composition-estimate}.

It remains to cross the critical set.  If
$x\in K\setminus\crit(U)$ is sufficiently close to $\crit(U)$, then
the Hölder continuity of $DU$ gives
\[
 |DU(x)|\le H\dist(x,\crit(U))^\alpha.
\]
Combining this with \eqref{eq:system-composition-estimate}, we obtain
\[
 \left|D^j\bigl(\Phi(DU)\bigr)(x)\right|
 \le C\dist(x,\crit(U))^{\alpha m-j},
 \qquad 0\le j\le k.
\]
Since $\alpha m>k$, Lemma \ref{lem:extension}, applied componentwise,
proves the $C^k$ extension and \eqref{eq:system-vanishing}.
\end{proof}

\begin{corollary}[Powers of the matrix gradient]
\label{cor:system-power}
Under the hypotheses of Theorem \ref{thm:system},
\[
 |DU|^m\in C^k_{\loc}(\Omega)
 \qquad\text{whenever}\qquad m>\frac{k}{\alpha}.
\]
In particular,
\[
 |DU|^m\in C^1_{\loc}(\Omega)
 \qquad\text{for every}\qquad
 m>\max\left\{\frac1{\alpha_0},p-1\right\}.
\]
For $1\le i\le d$ its first derivative is
\begin{equation}\label{eq:system-power-derivative}
 \partial_i(|DU|^m)=
 \begin{cases}
 m|DU|^{m-2}
 \displaystyle\sum_{\beta=1}^M\sum_{j=1}^d
 (\partial_jU^\beta)(\partial_{ij}U^\beta),
 &DU\ne0,\\[8pt]
 0,&DU=0.
 \end{cases}
\end{equation}
\end{corollary}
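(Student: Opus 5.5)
This corollary follows from Theorem \ref{thm:system} by specializing to $\Phi(P)=|P|^m$, where $|\cdot|$ is the Frobenius norm on $\R^{M\times d}$. This map is smooth on $\R^{M\times d}\setminus\{0\}$, since $|P|^2=\sum_{\beta,j}(P^\beta_j)^2$ is a nonvanishing polynomial there and $t\mapsto t^{m/2}$ is smooth for $t>0$. It is also positively homogeneous of degree $m$, and it extends continuously by $\Phi(0)=0$ because $m>0$. Theorem \ref{thm:system} therefore applies directly whenever $m>k/\alpha$, which gives $|DU|^m\in C^k_{\loc}(\Omega)$.

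For the second claim, take $k=1$ and choose $\alpha=\min\{\alpha_0,1/(p-1)\}$. The only point to check is that this choice is admissible in \eqref{eq:system-alpha}, which requires $\alpha<1$. This holds since $\alpha_0<1$. Then $1/\alpha=\max\{1/\alpha_0,p-1\}$, so the condition $m>1/\alpha$ is exactly $m>\max\{1/\alpha_0,p-1\}$.

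For the derivative formula \eqref{eq:system-power-derivative}, argue on two sets separately.

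\emph{Noncritical set.} On $\Omega\setminus\crit(U)$, the proof of Theorem \ref{thm:system} (through the rescaled system and the Schauder bootstrap, or Lemma \ref{lem:noncritical-smoothness} adapted to systems) shows that $U$ is smooth near each point. The classical chain rule then gives
\[
 \partial_i|DU|^m=\frac m2|DU|^{m-2}\,\partial_i|DU|^2
 =m|DU|^{m-2}\sum_{\beta=1}^M\sum_{j=1}^d\partial_jU^\beta\,\partial_{ij}U^\beta .
\]

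\emph{Critical set.} On $\crit(U)$, the vanishing statement \eqref{eq:system-vanishing} with $j=1$ gives $\partial_i(|DU|^m)=0$.

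No step is a real obstacle. The only points requiring care are the admissibility $\alpha<1$ in the choice of exponent and the remark that the second derivatives appearing in the formula are only claimed off $\crit(U)$, where $U$ is smooth.
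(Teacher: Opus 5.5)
Your proposal is correct and follows the route the paper intends. The paper gives no separate proof of this corollary, but its proof of the scalar analogue, Corollary~\ref{cor:power}, is exactly your argument: specialize Theorem~\ref{thm:system} to $\Phi(P)=|P|^m$, take $\alpha=\min\{\alpha_0,1/(p-1)\}$ for the $C^1$ claim, and obtain the derivative formula from the classical chain rule off $\crit(U)$ together with \eqref{eq:system-vanishing} on $\crit(U)$. Your checks that $\alpha<1$ and that $1/\alpha=\max\{1/\alpha_0,p-1\}$ are the right details to verify.
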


\begin{remark}[Sharpness in the system class]
\label{rem:system-sharpness}
The scalar constant-source example embeds into the system by taking
\[
 U(x)=\left(\frac{p-1}{p}|x_1|^{p/(p-1)},0,\ldots,0\right),
 \qquad
 F=(1,0,\ldots,0).
\]
Then \eqref{eq:p-system} holds and
$|DU|^m=|x_1|^{m/(p-1)}$.  Hence the strict condition $m>p-1$ for a
universal $C^1$ result remains necessary even within the vectorial
class.
\end{remark}

\begin{remark}[Structural scope]
Theorem \ref{thm:system} concerns the Uhlenbeck-type radial flux
$\mathbb A(P)=|P|^{p-2}P$ and assumes the full Hölder continuity of
$DU$.  It is not a statement for arbitrary systems with merely
$p$-growth, for which full gradient regularity can fail.  The same
proof does extend to autonomous matrix-valued fluxes
$\mathbb A:\R^{M\times d}\to\R^{M\times d}$ that are
$(p-1)$-homogeneous, smooth off the origin, and uniformly strongly
elliptic on the unit matrix sphere, provided the solution has the
corresponding $C^{1,\alpha_0}$ regularity.
\end{remark}

\section{The parabolic \texorpdfstring{$p$}{p}-Laplace system}
\label{sec:parabolic-system}

Let $M\ge1$, let $\Omega_T=\Omega\times(0,T)$, and consider a weak
solution of
\begin{equation}\label{eq:parabolic-system}
 \partial_tU-\operatorname{div}\bigl(|DU|^{p-2}DU\bigr)=F
 \qquad\text{in }\Omega_T,
\end{equation}
where $U:\Omega_T\to\R^M$, $F:\Omega_T\to\R^M$, and $DU$ denotes the
spatial matrix gradient.  Its critical set is
\begin{equation}\label{eq:parabolic-critical-set}
 \crit_T(U):=\{(x,t)\in\Omega_T:DU(x,t)=0\}.
\end{equation}

By a local weak solution we mean
\[
 U\in C^0_{\loc}\bigl((0,T);L^2_{\loc}(\Omega;\R^M)\bigr)
 \cap L^p_{\loc}\bigl((0,T);W^{1,p}_{\loc}(\Omega;\R^M)\bigr)
\]
such that
\begin{equation}\label{eq:parabolic-weak-formulation}
 -\int_{\Omega_T}U\cdot\partial_t\varphi
 +\int_{\Omega_T}|DU|^{p-2}DU:D\varphi
 =\int_{\Omega_T}F\cdot\varphi
\end{equation}
for every $\varphi\in C_c^\infty(\Omega_T;\R^M)$.

The intrinsic scales in space and time are different.  We assume that
there are fixed
$\alpha,\beta\in(0,1)$ such that, on every compact cylinder
$\mathcal U_T\Subset\Omega_T$, there is a constant
$H=H(\mathcal U_T)$ for which
\begin{equation}\label{eq:parabolic-gradient-holder}
 |DU(x,t)-DU(y,s)|
 \le H\bigl(|x-y|^\alpha+|t-s|^\beta\bigr).
\end{equation}
For the forcing term to remain uniformly bounded after normalization,
we impose
\begin{equation}\label{eq:parabolic-alpha-restriction}
 \alpha\le\frac1{p-1}.
\end{equation}
Define
\begin{equation}\label{eq:parabolic-exponents}
 \gamma:=
 \max\left\{\frac1\alpha,
       \frac{\beta^{-1}+p-2}{2}\right\},
 \qquad
 \eta:=2\gamma+2-p.
\end{equation}
Then
\begin{equation}\label{eq:parabolic-exponent-relations}
 \gamma\ge p-1,\qquad
 \alpha\gamma\ge1,\qquad
 \beta\eta\ge1,\qquad
 \eta-\gamma=\gamma+2-p\ge1.
\end{equation}

\begin{theorem}[Parabolic homogeneous compositions]
\label{thm:parabolic-system}
Let $F\in C^\infty(\Omega_T;\R^M)$ and let $U$ be a weak solution of
\eqref{eq:parabolic-system}.  Assume that $DU$ satisfies
\eqref{eq:parabolic-gradient-holder} locally, where
\eqref{eq:parabolic-alpha-restriction} holds, and let $\gamma,\eta$ be
given by \eqref{eq:parabolic-exponents}.  Let
\[
 \Phi\in C^\infty
 \bigl(\R^{M\times d}\setminus\{0\};\R^N\bigr)
\]
be positively homogeneous of degree $m>0$, and set $\Phi(0)=0$.

For every pair of nonnegative integers $(a,b)$ and every compact
$K\Subset\Omega_T$, there exist $q_0>0$ and $C_{a,b}<\infty$ such that
\begin{equation}\label{eq:parabolic-mixed-estimate}
 \left|D_x^a\partial_t^b\bigl(\Phi(DU)\bigr)(z)\right|
 \le C_{a,b}|DU(z)|^{m-a\gamma-b\eta}
\end{equation}
whenever $z\in K\setminus\crit_T(U)$,
$0<|DU(z)|\le q_0$, and the exponent on the right is positive.
Here $D_x^a$ denotes any spatial derivative of total order $a$.
The constants depend only on the compact inclusions, the displayed
H\"older modulus, $d,M,p,\alpha,\beta,a,b$, a finite collection of
local derivatives of $F$, and a finite collection of derivatives of
$\Phi$ on a fixed matrix annulus.

Consequently, if $k\in\N$, then:
\begin{enumerate}
\item If $m>k\gamma$, all spatial derivatives
$D_x^a(\Phi(DU))$, $0\le a\le k$, extend continuously to
$\Omega_T$, vanish on $\crit_T(U)$, and
\begin{equation}\label{eq:parabolic-spatial-conclusion}
 \Phi(DU)\in
 C^0_{\loc}\bigl((0,T);C^k_{\loc}(\Omega;\R^N)\bigr).
\end{equation}
\item If $m>k\eta$, then
\begin{equation}\label{eq:parabolic-full-conclusion}
 \Phi(DU)\in C^k_{\loc}(\Omega_T;\R^N)
\end{equation}
in the ordinary space-time sense, and
\begin{equation}\label{eq:parabolic-mixed-vanishing}
 D_x^a\partial_t^b\bigl(\Phi(DU)\bigr)=0
 \quad\text{on }\crit_T(U)
 \qquad(a+b\le k).
\end{equation}
\end{enumerate}
\end{theorem}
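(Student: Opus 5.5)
We follow the elliptic scheme with a parabolic intrinsic normalization. Fix compact cylinders $K\Subset\mathcal U_T\Subset\Omega_T$, let $H$ be the constant in \eqref{eq:parabolic-gradient-holder} on $\mathcal U_T$, take $z_0=(x_0,t_0)\in K\setminus\crit_T(U)$ with $q=|DU(z_0)|$ small, and set
\[
 r:=c\,q^{\gamma},\qquad \tau:=c^2 q^{\eta}=r^2q^{2-p}\cdot c^{0},\qquad
 V(y,s):=\frac{U(x_0+ry,\,t_0+\tau s)-U(x_0,t_0)}{rq},
\]
for $(y,s)\in Q_2:=B_2\times(-4,0]$ (or a centered cylinder), with $c$ small depending on $H$. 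The relation $\eta=2\gamma+2-p$ is exactly what makes $\tau=r^2q^{2-p}$. A direct computation then gives
\[
 \partial_sV-\operatorname{div}\bigl(|DV|^{p-2}DV\bigr)=G,\qquad
 G(y,s)=rq^{1-p}F(x_0+ry,t_0+\tau s).
\]
Because $\alpha\gamma\ge1$ and $\beta\eta\ge1$, we have $H(r^\alpha+\tau^\beta)\le Hc^\alpha q+Hc^{2\beta}q\le q/4$ for $c$ small and $q\le1$. This yields $3/4\le|DV|\le5/4$ on $Q_2$ together with a small parabolic Hölder seminorm of $DV$ (rescaled exponents $\alpha,\beta$). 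Since $\gamma\ge p-1$, we get $rq^{1-p}\lesssim q^{\gamma-(p-1)}\le C$, and each $y$- or $s$-derivative of $G$ brings an extra factor $r$ or $\tau$, both at most $1$. Hence $G$ is bounded in every $C^{L}$ norm uniformly in $z_0$ and $q$. Finally, $|V(y,s)|$ is bounded: the spatial part is controlled by $|DV|$, and the time oscillation of $V$ at $y=0$ is controlled from the equation by an energy or Poincaré-type argument on the cylinder.

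\textbf{Step 1: uniform parabolic Schauder bootstrap.} This is the analogue of Lemma \ref{lem:schauder-bootstrap}. We take difference quotients in $y$, test with a cutoff times $\delta_hV$ in the Steklov-averaged weak form, and use the strong ellipticity \eqref{eq:system-strong-ellipticity} on $\mathbb K_*$. This gives $DV\in L^2_{\loc}W^{1,2}$ and $\partial_sV\in L^2_{\loc}$. Each $\partial_\ell V$ then solves a linear parabolic system with $C^{0,\alpha}$ (parabolic) strongly elliptic coefficients $D\mathbb A(DV)$. Parabolic Schauder theory (Campanato approach, as in Giaquinta--Martinazzi adapted to parabolic systems) gives $V\in C^{2+\alpha,1+\alpha/2}$ on smaller cylinders. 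We then differentiate repeatedly in $y$ and $s$ and iterate on shrinking cylinders, obtaining
\[
 \|D_y^a\partial_s^bV\|_{L^\infty(Q_1)}\le C_{a,b}.
\]
This is the main obstacle. The delicate points are justifying the first differentiability from the weak formulation, since $\partial_tU$ is only distributional, and keeping the constants independent of $q$. Our first choice is to transfer the classical parabolic Schauder theory for linear strongly elliptic systems, using that every coefficient depends only on $DV$ lying in the fixed annulus. Time regularity follows by expressing $\partial_s$ through the equation and spatial derivatives.

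\textbf{Step 2: scaling back.} Homogeneity gives $\Phi(DU(x_0+ry,t_0+\tau s))=q^m\Phi(DV(y,s))$. The Faà di Bruno argument of Proposition \ref{prop:composition}, now with mixed derivatives, gives
\[
 r^a\tau^b\,D_x^a\partial_t^b\Phi(DU)(z_0)=q^m D_y^a\partial_s^b\Phi(DV)(0).
\]
Therefore
\[
 \bigl|D_x^a\partial_t^b\Phi(DU)(z_0)\bigr|\le C\,q^{m}r^{-a}\tau^{-b}=C\,q^{m-a\gamma-b\eta},
\]
which is \eqref{eq:parabolic-mixed-estimate}.

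\textbf{Step 3: crossing the critical set.} We use the parabolic distance $\delta(z)=\inf_{w\in\crit_T}\bigl(|x-y|^{1/\gamma}+|t-s|^{1/\eta}\bigr)$ with the interpolation metric, or more simply $|DU(z)|\le H(\dist_x^\alpha+\dist_t^\beta)$. For (2), we work with the Euclidean distance $d(z)$ in space-time. Since $\alpha,\beta\le1$ and $d\le1$, we have $|DU(z)|\le 2H\,d(z)^{\min(\alpha,\beta)}$. The natural cost is $\eta\ge\max(\gamma,1/\beta)$, so a bound of the form $|DU|^{m-j\eta}\lesssim d^{\mu-j}$ with $\mu>k$ requires care. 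We bound each derivative of order $j=a+b$ by $q^{m-j\eta}$, using $\eta\ge\gamma$, and need $q\lesssim d^{1/\eta}$. This holds because $|DU|\le H(d^\alpha+d^\beta)\le 2Hd^{\min(\alpha,\beta)}$, while $1/\eta\le\beta$ and $1/\eta\le 1/\gamma\le\alpha$. Hence $|D^jF|\lesssim d^{(m-j\eta)/\eta}=d^{m/\eta-j}$ with $\mu=m/\eta>k$, and Lemma \ref{lem:extension} in $\R^{d+1}$ gives (2). For (1), we fix $t$ and apply the same argument in $x$ with $|DU(x,t)|\le H\dist_x(x,\crit_T\cap\{t\})^\alpha$ and $\mu=m/\gamma$ (using $1/\gamma\le\alpha$). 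This gives $C^k$ in $x$ with vanishing derivatives. Continuity of $D_x^a\Phi(DU)$ jointly in $(x,t)$ follows from smoothness off $\crit_T$ together with the bound $q^{m-a\gamma}\to0$ at critical points, which is uniform, so \eqref{eq:parabolic-spatial-conclusion} holds.
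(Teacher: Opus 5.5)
Your proposal is correct. Steps 1 and 2 coincide with the paper's proof:
\begin{itemize}
\item the same intrinsic scales $r=cq^\gamma$ and $\vartheta=r^2q^{2-p}=c^2q^\eta$;
\item the same normalized system with forcing $rq^{1-p}F$;
\item a uniform Schauder bootstrap for the differentiated parabolic system;
\item the exact scaling identity for $\Phi(DU)$.
\end{itemize}
There are two harmless slips. The rescaled coefficients $D\mathbb A(D_yV)$ are parabolically H\"older with exponent $\min\{\alpha,2\beta\}$, not $\alpha$. The bound on $|V|$ itself is unnecessary, since one works with $\partial_\ell V$, which is bounded by $5/4$.

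The genuine difference is how you cross $\crit_T(U)$.

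\emph{The paper's route.} It runs a coordinatewise induction. From the one-directional flatness bounds $|DU(x_*+h,t_*)|\le H|h|^\alpha$ and $|DU(x_*,t_*+s)|\le H|s|^\beta$, it shows that the next spatial (resp.\ temporal) difference quotient of $D_x^a\partial_t^b\Phi(DU)$ at a critical point tends to zero. This works because the remaining exponent exceeds $\gamma\ge1/\alpha$ (resp.\ $\eta\ge1/\beta$). It then uses decay of the next derivative for continuity.

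\emph{Your route.} You bound every mixed derivative of order $j$ by the worst cost $q^{m-j\eta}$, using $\gamma\le\eta$ and $q\le1$. You then convert via $q\le 2H\rho^{\min\{\alpha,\beta\}}\le 2H\rho^{1/\eta}$, where $\rho$ is the Euclidean space-time distance to $\crit_T(U)$. This is valid since $\alpha\ge1/\gamma\ge1/\eta$ and $\beta\ge1/\eta$. You feed the result directly into Lemma \ref{lem:extension} in $\R^{d+1}$ with $\mu=m/\eta$. For part (1), you apply the lemma slice by slice in $x$ with $\mu=m/\gamma$, and recover joint continuity from the decay $q^{m-a\gamma}\to0$.

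\emph{Comparison.} Your route gives the same thresholds $m>k\eta$ and $m>k\gamma$. It is arguably tidier, since the already-proved flat extension lemma delivers space-time $C^k$ regularity and the vanishing of all derivatives in one stroke. The paper's induction keeps spatial and temporal costs separate, so it adapts more naturally to anisotropic refinements with different numbers of $x$- and $t$-derivatives. Your isotropic reduction gives that up, but nothing is lost for the theorem as stated.
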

\begin{proof}
Fix a compact cylinder $K\Subset\mathcal U_T\Subset\Omega_T$ on which
\eqref{eq:parabolic-gradient-holder} holds with constant $H$.  Let
$z_0=(x_0,t_0)\in K\setminus\crit_T(U)$ and set
\[
 q=|DU(z_0)|\in(0,1].
\]
Choose a fixed $c=c(H,\alpha,\beta)\in(0,1)$ sufficiently small and
put
\begin{equation}\label{eq:parabolic-intrinsic-scales}
 r=cq^\gamma,
 \qquad
 \vartheta=r^2q^{2-p}=c^2q^\eta.
\end{equation}
After decreasing $q_0$ in terms of the parabolic distance from $K$ to
$\partial\mathcal U_T$, the cylinder
\[
 Q_{2r,4\vartheta}(z_0)
 :=B_{2r}(x_0)\times(t_0-4\vartheta,t_0+4\vartheta)
\]
is compactly contained in $\mathcal U_T$.  By
\eqref{eq:parabolic-gradient-holder} and
\eqref{eq:parabolic-exponent-relations},
\begin{align*}
 |DU(x,t)-DU(z_0)|
 &\le H\bigl((2r)^\alpha+(4\vartheta)^\beta\bigr)\\
 &\le C H c^{\min\{\alpha,2\beta\}}q
\end{align*}
in this cylinder.  Taking $c$ smaller if necessary gives
\begin{equation}\label{eq:parabolic-gradient-annulus}
 \frac{3q}{4}\le|DU|\le\frac{5q}{4}
 \quad\text{in }Q_{2r,4\vartheta}(z_0).
\end{equation}

For $\rho>0$, write
\[
 \mathcal Q_\rho:=B_\rho\times(-\rho^2,\rho^2).
\]
On $\mathcal Q_2$ define
\begin{equation}\label{eq:parabolic-normalization}
 V(y,\tau)
 :=\frac{U(x_0+ry,t_0+\vartheta\tau)-U(x_0,t_0)}
 {rq}.
\end{equation}
Then
\begin{equation}\label{eq:parabolic-normalized-gradient}
 \frac34\le|D_yV|\le\frac54
 \quad\text{in }\mathcal Q_2.
\end{equation}
The choice $\vartheta=r^2q^{2-p}$ gives the normalized system
\begin{equation}\label{eq:parabolic-normalized-system}
 \partial_\tau V
 -\operatorname{div}_y\bigl(|D_yV|^{p-2}D_yV\bigr)
 =G(y,\tau),
\end{equation}
where
\begin{equation}\label{eq:parabolic-normalized-force}
 G(y,\tau)
 =rq^{1-p}F(x_0+ry,t_0+\vartheta\tau).
\end{equation}
Since $\gamma\ge p-1$, all normalized derivatives of $G$ are uniformly
bounded.  More precisely,
\begin{equation}\label{eq:parabolic-force-derivatives}
 D_y^a\partial_\tau^bG
 =r^{a+1}\vartheta^bq^{1-p}
 (D_x^a\partial_t^bF)(x_0+ry,t_0+\vartheta\tau),
\end{equation}
and the power of $q$ in the prefactor is
\[
 (a+1)\gamma+b\eta+1-p\ge0.
\]

We now justify the uniform parabolic estimates.  By
\eqref{eq:parabolic-gradient-annulus}, the coefficient tensor obtained
by linearizing the flux is smooth and uniformly strongly elliptic,
with constants depending only on $p$; see
\eqref{eq:system-flux-derivative}--%
\eqref{eq:system-strong-ellipticity}.  Moreover,
\eqref{eq:parabolic-gradient-holder} and the choices of $r$ and
$\vartheta$ give a uniform Hölder bound for $D_yV$ on
$\mathcal Q_2$, with parabolic exponent
\[
 \bar\alpha:=\min\{\alpha,2\beta\}.
\]
Spatial difference quotients of
\eqref{eq:parabolic-normalized-system} first justify the equation for
$W_\ell=\partial_\ell V$:
\begin{equation}\label{eq:parabolic-linearized-system}
 \partial_\tau W_\ell
 -\operatorname{div}_y
 \bigl(D\mathbb A(D_yV)D_yW_\ell\bigr)
 =\partial_\ell G.
\end{equation}
Here $\|W_\ell\|_{L^\infty(\mathcal Q_2)}\le5/4$, so no bound for the
time-dependent additive constant in $V$ is needed.

We first apply the energy estimate to the spatial difference
quotients of \eqref{eq:parabolic-normalized-system}.  The
strong ellipticity in
\eqref{eq:system-strong-ellipticity} gives uniform local
$L^2$ bounds for their spatial gradients.  Passing to the limit
justifies \eqref{eq:parabolic-linearized-system} weakly.  Its
coefficient tensor
\[
 \mathbb B(y,\tau):=D\mathbb A(D_yV(y,\tau))
\]
is uniformly strongly parabolic and has a uniform parabolic
$C^{0,\bar\alpha}$ norm.  The divergence-form Schauder estimate for
linear parabolic systems therefore gives
$W_\ell\in C^{1+\bar\alpha,(1+\bar\alpha)/2}_{\loc}$.  It follows
that
\eqref{eq:parabolic-linearized-system} can be written in
nondivergence form with H\"older lower-order coefficients; the
nondivergence-form parabolic Schauder estimate yields two further
spatial derivatives and one time derivative.  Repeated differentiation
of the equation, always on a smaller concentric cylinder, completes
the bootstrap.  At each step the new right-hand side is a finite sum
of products of already controlled derivatives of $D_yV$, derivatives
of $G$, and derivatives of $\mathbb A$ on the fixed annulus.
Consequently,
\begin{equation}\label{eq:parabolic-uniform-bootstrap}
 \sup_{\mathcal Q_1}
 |D_y^a\partial_\tau^bD_yV|
 \le C_{a,b}
 \qquad\text{for all }a,b\ge0.
\end{equation}
The constants are independent of $z_0$ and $q$; see
\cite[Chapters~III--IV]{LadyzhenskayaSolonnikovUraltseva1968}.  Indeed,
the annulus condition \eqref{eq:parabolic-normalized-gradient}, the
uniform H\"older modulus of $D_yV$, and
\eqref{eq:parabolic-force-derivatives} ensure that every constant in
the iteration is independent of the base point and of $q$.

By homogeneity,
\begin{equation}\label{eq:parabolic-composition-scaling}
 \Phi\bigl(DU(x_0+ry,t_0+\vartheta\tau)\bigr)
 =q^m\Phi(D_yV(y,\tau)).
\end{equation}
Combining the chain rule,
\eqref{eq:parabolic-uniform-bootstrap}, and
\eqref{eq:parabolic-intrinsic-scales}, we obtain
\begin{align*}
 \left|D_x^a\partial_t^b
 \bigl(\Phi(DU)\bigr)(z_0)\right|
 &\le C_{a,b}q^mr^{-a}\vartheta^{-b}\\
 &\le C_{a,b}q^{m-a\gamma-b\eta},
\end{align*}
which proves \eqref{eq:parabolic-mixed-estimate}.

It remains to extend the derivatives across $\crit_T(U)$.  At
$z_*=(x_*,t_*)\in\crit_T(U)$, assumption
\eqref{eq:parabolic-gradient-holder} gives
\begin{align}
 |DU(x_*+h,t_*)|&\le H|h|^\alpha,
 \label{eq:parabolic-space-flatness}\\
 |DU(x_*,t_*+s)|&\le H|s|^\beta.
 \label{eq:parabolic-time-flatness}
\end{align}
Suppose $a+b<k$ and the mixed derivative
$D_x^a\partial_t^b(\Phi(DU))$ has already been extended by zero on the
critical set.  If the next derivative is spatial, then
\[
 m-a\gamma-b\eta>\gamma\ge\frac1\alpha;
\]
if it is temporal, then
\[
 m-a\gamma-b\eta>\eta\ge\frac1\beta.
\]
These inequalities follow from $m>k\eta$,
$\gamma\le\eta$, and $a+b+1\le k$.  Estimates
\eqref{eq:parabolic-mixed-estimate},
\eqref{eq:parabolic-space-flatness}, and
\eqref{eq:parabolic-time-flatness} show that the corresponding
difference quotient at $z_*$ tends to zero.  The estimate for the next
derivative also tends to zero as $z\to\crit_T(U)$, proving continuity.
Induction over coordinate derivatives proves
\eqref{eq:parabolic-full-conclusion} and
\eqref{eq:parabolic-mixed-vanishing}.

For the spatial conclusion, the same induction is performed only in
the $x$ variables.  The condition $m>k\gamma$ is then sufficient, and
joint continuity in $(x,t)$ follows from
\eqref{eq:parabolic-mixed-estimate} with $b=0$ and the smoothness away
from the critical set.
\end{proof}

\begin{corollary}[Powers of the parabolic matrix gradient]
\label{cor:parabolic-power}
Under the hypotheses of Theorem \ref{thm:parabolic-system},
\begin{align*}
 |DU|^m&\in
 C^0_{\loc}\bigl((0,T);C^k_{\loc}(\Omega)\bigr)
 &&\text{if }m>k\gamma,\\
 |DU|^m&\in C^k_{\loc}(\Omega_T)
 &&\text{if }m>k\eta.
\end{align*}
All corresponding spatial or mixed derivatives vanish on
$\crit_T(U)$.
\end{corollary}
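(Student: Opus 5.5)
The corollary should follow by specializing Theorem \ref{thm:parabolic-system} to $\Phi(P)=|P|^m$, where $|P|$ is the Frobenius norm on $\R^{M\times d}$. First I will check the hypotheses on $\Phi$. The map $P\mapsto|P|^2$ is a polynomial, and $|P|^m=(|P|^2)^{m/2}$ with $t\mapsto t^{m/2}$ smooth on $(0,\infty)$. Hence $\Phi\in C^\infty(\R^{M\times d}\setminus\{0\})$, it is real valued ($N=1$), and $\Phi(tP)=t^m\Phi(P)$ for $t>0$. Setting $\Phi(0)=0$ is consistent with the convention in the theorem, and since $m>0$ it agrees with the continuous extension of $|DU|^m$ to $\crit_T(U)$.

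With $k\in\N$ fixed, the first case of Theorem \ref{thm:parabolic-system} applies when $m>k\gamma$. It gives $|DU|^m\in C^0_{\loc}((0,T);C^k_{\loc}(\Omega))$ in the sense of \eqref{eq:parabolic-spatial-conclusion}. It also says that $D_x^a(|DU|^m)$ vanishes on $\crit_T(U)$ for $0\le a\le k$. The second case applies when $m>k\eta$ and gives $|DU|^m\in C^k_{\loc}(\Omega_T)$ together with the mixed vanishing \eqref{eq:parabolic-mixed-vanishing} for $a+b\le k$. Since $\eta-\gamma\ge1$ by \eqref{eq:parabolic-exponent-relations}, the condition $m>k\eta$ implies $m>k\gamma$. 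So in the second regime the spatial statement holds as well, and the two displayed memberships are consistent with each other.

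No step here is a real obstacle; the work is entirely contained in Theorem \ref{thm:parabolic-system}. The only point to check is that the phrase ``all corresponding spatial or mixed derivatives vanish'' matches the theorem exactly. Under $m>k\gamma$ this means the spatial derivatives of order at most $k$, and under $m>k\eta$ it means all mixed derivatives $D_x^a\partial_t^b$ with $a+b\le k$. I would state this explicitly when recording the corollary.
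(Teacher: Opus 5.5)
Your proposal is correct and follows the paper's intended argument: the corollary is the specialization $\Phi(P)=|P|^m$ of Theorem~\ref{thm:parabolic-system}, in the same way that Corollary~\ref{cor:power} is obtained from Theorem~\ref{thm:main}. Your checks (smoothness off the origin, homogeneity of degree $m>0$, and matching the vanishing statements to the regimes $m>k\gamma$ and $m>k\eta$) are all that is needed. The remark that $m>k\eta$ implies $m>k\gamma$ is a harmless extra consistency check.
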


\begin{remark}[Standard parabolic Hölder exponents]
\label{rem:standard-parabolic-exponents}
If \eqref{eq:parabolic-gradient-holder} holds with
$\beta=\alpha/2$, then
\begin{equation}\label{eq:standard-parabolic-gamma-eta}
 \gamma=\frac1\alpha+\frac{(p-2)_+}{2},
 \qquad
 \eta=\frac2\alpha+(2-p)_+.
\end{equation}
Thus spatial and temporal derivatives have different homogeneity
costs even though both arise from the same normalized uniformly
parabolic system.
\end{remark}

\begin{remark}[Sharpness]
The thresholds in Theorem \ref{thm:parabolic-system} are sufficient
and are not asserted to be optimal.  The stationary elliptic example
\eqref{eq:model-intro}, embedded in one component, is also a parabolic
example and retains the necessary spatial condition $m>p-1$ for
universal $C^1$ regularity.  Temporal behavior at an interior time can
be tested with
\[
 U(x,t)=(t-t_*)x_1e_1,\qquad F(x,t)=x_1e_1,
 \qquad t_*\in(0,T),
\]
for which \eqref{eq:parabolic-system} holds and
$|DU|^m=|t-t_*|^m$.  Hence time regularity also has independent
endpoint obstructions, while the general optimal mixed threshold
remains open.
\end{remark}

\section{A porous-medium counterpart}
\label{sec:porous-medium}

We finally discuss the porous medium equation.  Here the intrinsic
scale is determined by the value of the solution rather than by the
size of its gradient.  Let $\mu>1$ and let $u\ge0$ be a weak solution
of
\begin{equation}\label{eq:porous-medium}
 \partial_tu-\Delta(u^\mu)=f
 \qquad\text{in }\Omega_T,
\end{equation}
where $f\in C^\infty(\Omega_T)$.  Assume that there are
$\alpha,\beta\in(0,1)$ such that, locally in $\Omega_T$,
\begin{equation}\label{eq:porous-holder}
 |u(x,t)-u(y,s)|
 \le H\bigl(|x-y|^\alpha+|t-s|^\beta\bigr).
\end{equation}
Define
\begin{equation}\label{eq:porous-costs}
 \gamma_\mu:=
 \max\left\{\frac1\alpha,\frac{\mu}{2},
 \frac{\beta^{-1}+\mu-1}{2}\right\},
 \qquad
 \eta_\mu:=2\gamma_\mu+1-\mu,
 \qquad
 \Lambda_\mu:=\max\{\gamma_\mu,\eta_\mu\}.
\end{equation}
Then $\alpha\gamma_\mu\ge1$, $\beta\eta_\mu\ge1$, and
$2\gamma_\mu-\mu\ge0$.

\begin{proposition}[Regularizing powers for porous-medium flow]
\label{prop:porous-power}
Under the preceding assumptions, let $m>0$.  For every compact
$K\Subset\Omega_T$ and every pair of nonnegative integers $a,b$, there
are $q_0>0$ and $C_{a,b}<\infty$ such that
\begin{equation}\label{eq:porous-mixed-estimate}
 \bigl|D_x^a\partial_t^b(u^m)(z)\bigr|
 \le C_{a,b}u(z)^{m-a\gamma_\mu-b\eta_\mu}
\end{equation}
whenever $z\in K$, $0<u(z)\le q_0$, and the exponent on the
right-hand side is positive.  Consequently, for every $k\in\N$,
\begin{align}
 u^m&\in
 C^0_{\loc}\bigl((0,T);C^k_{\loc}(\Omega)\bigr)
 &&\text{if }m>k\gamma_\mu,
 \label{eq:porous-spatial-ck}\\
 u^m&\in C^k_{\loc}(\Omega_T)
 &&\text{if }m>k\Lambda_\mu.
 \label{eq:porous-full-ck}
\end{align}
All corresponding derivatives vanish on the zero set $\{u=0\}$.
\end{proposition}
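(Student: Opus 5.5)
We mimic the proof of Theorem \ref{thm:parabolic-system}, with the value $q=u(z_0)$ playing the role of the gradient size. Fix $K\Subset\mathcal U_T\Subset\Omega_T$ on which \eqref{eq:porous-holder} holds with constant $H$. Let $z_0=(x_0,t_0)\in K$ with $0<q:=u(z_0)\le q_0$.

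\textbf{Intrinsic normalization.} Put
\[
 r=cq^{\gamma_\mu},\qquad \vartheta=r^2q^{1-\mu}=c^2q^{\eta_\mu},
\]
with $c=c(H,\alpha,\beta)$ small. Since $\alpha\gamma_\mu\ge1$ and $\beta\eta_\mu\ge1$, \eqref{eq:porous-holder} gives
\[
 |u-q|\le H\bigl((2r)^\alpha+(4\vartheta)^\beta\bigr)\le CHc^{\min\{\alpha,2\beta\}}q\le q/4
\]
on $B_{2r}(x_0)\times(t_0-4\vartheta,t_0+4\vartheta)$. This cylinder lies in $\mathcal U_T$ once $q_0$ is small, so $3q/4\le u\le5q/4$ there. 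Define
\[
 w(y,\tau)=u(x_0+ry,t_0+\vartheta\tau)/q .
\]
Then $3/4\le w\le5/4$ on $\mathcal Q_2$, and
\[
 \partial_\tau w-\Delta_y(w^\mu)=g,\qquad g(y,\tau)=\vartheta q^{-1}f(x_0+ry,t_0+\vartheta\tau).
\]
The derivatives satisfy $D_y^a\partial_\tau^b g=r^a\vartheta^{b+1}q^{-1}(D^a_x\partial^b_tf)(\cdot)$. The power of $q$ in the prefactor is $a\gamma_\mu+(b+1)\eta_\mu-1$. This is $\ge0$ because $\eta_\mu=2\gamma_\mu+1-\mu\ge1$, which follows from $2\gamma_\mu\ge\mu$. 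So all derivatives of $g$ are uniformly bounded for $q\le1$.

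The Hölder seminorm of $w$ is uniformly bounded as well. Indeed, $r^\alpha/q\le c^\alpha q^{\alpha\gamma_\mu-1}\le c^\alpha$, and similarly $\vartheta^\beta/q\le c^{2\beta}$.

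\textbf{Uniform bootstrap.} Write the equation as
\[
 \partial_\tau w-\operatorname{div}(\mu w^{\mu-1}Dw)=g,
\]
a quasilinear uniformly parabolic equation with coefficient $\mu w^{\mu-1}\in[\mu(3/4)^{\mu-1},\mu(5/4)^{\mu-1}]$. This coefficient is Hölder continuous with uniform bounds.

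The steps are as follows.
\begin{enumerate}
\item Energy estimates and the De Giorgi--Nash theory give uniform $C^{1+\bar\alpha}$ bounds on $\mathcal Q_{3/2}$ via divergence-form parabolic Schauder theory.
\item The equation can then be rewritten in nondivergence form, $\partial_\tau w-\mu w^{\mu-1}\Delta w-\mu(\mu-1)w^{\mu-2}|Dw|^2=g$.
\item Iterate the nondivergence Schauder estimates \cite[Chapters~III--IV]{LadyzhenskayaSolonnikovUraltseva1968} on shrinking cylinders, differentiating in $y$ and $\tau$. Each step has a right-hand side built from previously controlled derivatives, derivatives of $g$, and smooth functions of $w$ on $[3/4,5/4]$.
\end{enumerate}
This yields $\sup_{\mathcal Q_1}|D_y^a\partial_\tau^b w|\le C_{a,b}$ independent of $z_0,q$.

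\textbf{Composition estimate.} The main obstacle is making the first regularity step quantitative from only a weak solution and the Hölder bound; here I would cite the standard uniformly parabolic theory as in the proof of Theorem \ref{thm:parabolic-system}. Given the bootstrap bounds, homogeneity gives $u^m(x_0+ry,t_0+\vartheta\tau)=q^mw^m$. Since $w^m$ is a smooth function of $w\in[3/4,5/4]$, its normalized derivatives are bounded. Hence
\[
 |D_x^a\partial_t^b(u^m)(z_0)|\le Cq^mr^{-a}\vartheta^{-b}\le Cq^{m-a\gamma_\mu-b\eta_\mu},
\]
which is \eqref{eq:porous-mixed-estimate}. Near points where $u>q_0$, the same argument at a fixed scale shows that $u^m$ is smooth on $\{u>0\}$.

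\textbf{Crossing the zero set.} At $z_*$ with $u(z_*)=0$, we have $u(x_*+h,t_*)\le H|h|^\alpha$ and $u(x_*,t_*+s)\le H|s|^\beta$. Now run the induction of Theorem \ref{thm:parabolic-system}, or Lemma \ref{lem:extension} in the spatial case.

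Suppose $D^a_x\partial_t^b(u^m)$ with $a+b<k$ has been extended by $0$. For the next derivative, the difference quotient in space is bounded by $C|h|^{\alpha(m-a\gamma_\mu-b\eta_\mu)-1}\to0$. This uses $m-a\gamma_\mu-b\eta_\mu>\gamma_\mu\ge1/\alpha$ when $m>k\Lambda_\mu$, or $m>k\gamma_\mu$ with $b=0$. The temporal difference quotient is handled the same way, using $\eta_\mu\ge1/\beta$.

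The next-order estimate tends to $0$ as $u\to0$, which gives continuity of the new derivative. This proves \eqref{eq:porous-full-ck}. Restricting the induction to spatial derivatives proves \eqref{eq:porous-spatial-ck}; joint continuity in $(x,t)$ follows from the $b=0$ estimate. In both cases all derivatives vanish on $\{u=0\}$.
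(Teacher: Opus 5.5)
Your proposal is correct and follows the paper's proof essentially step for step. It uses the same intrinsic scales $r=cq^{\gamma_\mu}$ and $\vartheta=r^2q^{1-\mu}$ and the same normalization; your forcing $\vartheta q^{-1}f$ is identical to the paper's $r^2q^{-\mu}f$, and your exponent $a\gamma_\mu+(b+1)\eta_\mu-1$ equals the paper's $(a+2)\gamma_\mu+b\eta_\mu-\mu$. It then applies a uniform interior Schauder bootstrap and the coordinatewise extension across $\{u=0\}$ taken from Theorem~\ref{thm:parabolic-system}, with somewhat more detail than the paper gives on the quasilinear structure and the induction inequalities.
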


\begin{proof}
Fix $z_0=(x_0,t_0)$ with
$q=u(z_0)\in(0,1]$ and choose
\[
 r=cq^{\gamma_\mu},
 \qquad
 \vartheta=r^2q^{1-\mu}=c^2q^{\eta_\mu},
 \qquad
 v(y,\tau)=q^{-1}u(x_0+ry,t_0+\vartheta\tau).
\]
By \eqref{eq:porous-holder} and the first two inequalities following
\eqref{eq:porous-costs}, $c$ can be fixed so that
$3/4\le v\le5/4$ on a fixed cylinder.  Equation
\eqref{eq:porous-medium} becomes
\[
 \partial_\tau v-\Delta_y(v^\mu)=G,
 \qquad
 G=r^2q^{-\mu}
 f(x_0+ry,t_0+\vartheta\tau).
\]
The derivatives of $G$ have prefactor
$r^{a+2}\vartheta^bq^{-\mu}$, whose power of $q$ is
$(a+2)\gamma_\mu+b\eta_\mu-\mu\ge0$.  Since $v$ remains in a fixed
positive interval, the normalized equation is uniformly parabolic,
and the classical interior Schauder bootstrap gives uniform bounds
for all mixed derivatives of $v$.  The identity
\[
 u^m(x_0+ry,t_0+\vartheta\tau)=q^mv(y,\tau)^m
\]
then yields \eqref{eq:porous-mixed-estimate}.  Finally,
\eqref{eq:porous-holder} controls $u$ by
$H|x-x_*|^\alpha$ or $H|t-t_*|^\beta$ at a point
$(x_*,t_*)\in\{u=0\}$.  The same coordinatewise extension argument
used in Theorem \ref{thm:parabolic-system} proves
\eqref{eq:porous-spatial-ck}--\eqref{eq:porous-full-ck}.
\end{proof}

\begin{remark}[Free-boundary obstruction]
The conclusion is consistent with the classical free-boundary theory;
see \cite{CaffarelliFriedman1980,DaskalopoulosHamilton1998,
Vazquez2007}.  At a regular interface point the Barenblatt solution
has the spatial behavior
\[
 u(x,t)\simeq \dist(x,\partial\{u(\cdot,t)>0\})_+^{1/(\mu-1)}.
\]
Thus $u^m$ behaves like the positive part of the distance raised to
$m/(\mu-1)$, and universal spatial $C^k$ regularity generally
requires the strict condition $m>k(\mu-1)$.  In particular, the
pressure power $u^{\mu-1}$ is typically Lipschitz, but it need not be
$C^1$ across the free boundary.
\end{remark}

\medskip
\noindent\textbf{Data availability.}
This research has no associated data.

\medskip
\noindent\textbf{Conflict of interest.}
The authors declare that they have no conflicts of interest.

\medskip
\noindent\textbf{Acknowledgments.}
Quoc-Hung Nguyen is supported by the CAS Project for Young Scientists
in Basic Research, Grant No.~YSBR-031, and by the NSFC under Grant
Nos.~1251101538 and 12595282.

\medskip
\noindent\textbf{AI assistance statement.}
The authors used the AI model GPT-5.6 Sol to assist with calculations
and writing.
The main ideas, mathematical validation, and all final checks remain
the sole responsibility of the authors.

\end{document}